\newtheorem{theorem}{Theorem}[section]
\newtheorem{lemma}[theorem]{Lemma}
\newtheorem{proposition}[theorem]{Proposition}
\newtheorem{corollary}[theorem]{Corollary}
\newtheorem{conjecture}[theorem]{Conjecture}
\newtheorem{example}[theorem]{Example}
\newtheorem{remarks}[theorem]{Remarks}
\theoremstyle{remark}
\newtheorem{claim}[theorem]{Claim}
\newtheorem{remark}[theorem]{Remark}
\numberwithin{equation}{section}
\newcommand{\PP}{\mathbb{P}}
\newcommand{\AAA}{\mathbb{A}}
\newcommand{\C}{\mathbb{C}}
\newcommand{\G}{\mathbb{G}}
\newcommand{\Q}{\mathbb{Q}}
\renewcommand{\le}{\leqslant}
\renewcommand{\ge}{\geqslant}
\newcommand{\SSS}{\mathcal{S}}
\newcommand{\TTT}{\mathcal{T}}
\newcommand{\I}{\mathcal{I}}
\renewcommand{\cL}{\mathcal{L}}
\newcommand{\cO}{\mathcal{O}}
\newcommand{\cP}{\mathcal{P}}
\newcommand{\cX}{\mathcal{X}}
\newcommand{\cZ}{\mathcal{Z}}
\DeclareMathOperator{\Proj}{\mathbb P}
\DeclareMathOperator{\p}{\mathbb P}
\DeclareMathOperator{\Sec}{{\rm Sec}}
\DeclareMathOperator{\Sing}{{\rm Sing}}
\DeclareMathOperator{\Tan}{{\rm Tan}}
\DeclareMathOperator{\tildetau}{\tilde \tau}
\DeclareMathOperator{\II}{{\rm II}}
\DeclareMathOperator{\map}{\dasharrow}
\DeclareMathOperator{\Bl}{{\rm Bl}}
\begin{document}
\title[On the classification of OADP varieties]
{On the classification of OADP varieties}

\author{Ciro Ciliberto}

\address{Dipartimento di Matematica,
Universit\`a di Roma Tor Vergata,
Via della Ricerca Scientifica, 00133 Roma, Italy}
\email{cilibert@mat.uniroma2.it}

\author{Francesco Russo}

\address{Dipartimento di Matematica,
Universit\`a di Catania,
Viale A. Doria 6, 95125 Catania, Italy}
\email{frusso@dmi.unict.it}

\begin{abstract} 
The main purpose of this paper is to show that OADP varieties
stand at an important crossroad of various main streets in 
different disciplines like projective geometry,
birational geometry and algebra. This is a good reason for studying and classifying them. 
Main specific results are: 
(a) the classification
of all OADP surfaces (regardless to their smoothness); (b) the classification
of a relevant class of normal OADP varieties of any dimension, which includes interesting
examples like lagrangian grassmannians. Following \cite {PR}, the equivalence of the classification in (b) 
with the one  of quadro--quadric Cremona transformations and of 
complex, unitary, cubic Jordan algebras are explained. 
\end{abstract}

\maketitle

\section*{Introduction}
The number of {\it apparent double points} of
 an irreducible projective variety $X$ of dimension $n$ in
$\Proj^{2n+1}$ is the number of secant lines to $X$ passing through a  general
point of $\Proj^{2n+1}$. The variety $X$ is called a {\it variety with one
apparent double point}, or {OADP} variety, if this number is 1. 
Hence {OADP} varieties are not secant defective, and can be
regarded as {\it the simplest non defective
varieties} of dimension $n$ in $\Proj^{2n+1}$. Probably this is the
reason why {OADP} varieties early attracted
the attention of algebraic geometers. There are other
reasons however. Indeed, as we will see in this paper, 
{some classes} of {OADP} varieties are 
interesting on their own, exhibiting strong unexpected relations with different important
subjects like the theory of unitary, complex, cubic \emph{Jordan algebras}
and the theory of {\it quadro--quadric} Cremona transformations 
(see \S \ref{sec:hypercase}, \cite {PR} and  \cite{PR2}).\par

The first instance of an OADP variety  is a rational
normal cubic curve in $\Proj^3$, which 
is the only {OADP} curve (see Proposition \ref{prop:curves}). 
Severi \cite {Se} took up the classification of  {OADP} surfaces. 
According to him the only
{OADP} surfaces with at most finitely many singularities are
quartic rational normal scrolls and (weak) del Pezzo quintics (see Theorem \ref
{thm:classif}). Severi's argument was affected by a gap, recently fixed in
 \cite {Ru}, see also \cite{OADP}. In \cite{OADP}
 there is also the description of large classes
of smooth OADP varieties of arbitrary large dimensions and 
general results about OADP varieties. The main result in \cite{OADP} is the classification of smooth
three dimensional OADP varieties. In \S \ref{sec:verra} below
we explain a construction of another large class of OADP varieties of any dimensions
 suggested by
A. Verra, which we dub {\it Verra varieties}. They are (in general singular, even non--normal) scrolls.

The classification of OADP varieties is in general quite hard and, as far as we know, no result
{is} known in arbitrary dimension, even under restrictive hypotheses like smoothness or considering only particular subclasses. However, as mentioned above,
a few general important properties of OADP varieties have been established
in \cite{OADP} (some of them generalized in \cite{CR} and \cite{IR} from
different viewpoints). For example, any OADP variety is rational
since it projects birationally from a general tangent space (see \cite[Corollary 4.2]{OADP}). 
We call \emph{Bronowski varieties} those varieties $X\subset \p^{2n+1}$
of dimension $n$ that, like OADP varieties, project birationally to $\p^n$ from a general tangent space.
According to an unproved assertion of Bronowski in \cite {Br}, which we call  \emph{Bronowski's conjecture} (see Conjecture \ref {conj:bron} below),
any Bronowski variety should be an OADP variety. 
A general tangent hyperplane section  of a Bronowski variety  of dimension $n$, which is not a scroll
over a curve, is irreducible and rational (see Proposition \ref{hyperlane}).
 Applying this to normal surfaces one
recovers Severi's classification  right
away (see Theorem \ref{thm:classif}). One of the main results of this paper
is the classification of all irreducible OADP surfaces, regardless to the dimension of their singular locus: they are either as in Severi's theorem or are Verra surfaces (see Theorem \ref{thm:classif}).

The degree of Verra varieties of any dimension $n\ge 2$ is unbounded while the classification in dimension
1 and 2 suggests that the degree of normal OADP varieties should be bounded by a function
depending on $n$, i.e. one could expect that there are
finitely many families of normal OADP varieties of a given dimension. 
The approach in the present paper suggests that
an {\it expected bound} for the degree of normal OADP varieties of dimension $n$ should be
$2^n+1$ (see \S \ref {ssec:fund}, in particular Lemma \ref {lem:3}). 
This is attained for $n=2$, not attained for $n=3$ under the smoothness
assumption but it could be attained in the singular case
(recently an interesting, unpublished, example with an intricate and fascinating projective and birational geometry has been suggested).

Motivated by these ideas and problems and by the results in \cite{PR} on varieties $X\subset\p^{2n+1}$ which are
\emph{$3$--covered by rational normal cubics} (i.e. there is  a rational normal cubic in $X$ passing through three general points of it), 
we have been lead to subdivide OADP varieties, or more generally Bronowski varieties, into classes according to the degree of the
so called {\it fundamental hypersurface $V\subset\p^n$ of $X$}. This is the image via the tangential
projection $\tau:X\map\p^n$  at a general point $x\in X$, of the exceptional divisor $E$ of the blow--up of $X$ at $x$ (see \S\ref{ssec:fund}
for definitions and details). Since $V$ is the image of $E$ via the map associated to the second fundamental
form of $X$ at $x$, $\bar \tau:E\map V$ is given by a linear system of quadrics on $E\cong \p^{n-1}$, hence  $1\leq\deg(V)\leq 2^{n-1}$, which points to the above conjectural bound.
>From this point of view the case $\deg(V)=1$, which we call the {\it hyperplane case}, is the {simplest}, and the one which gives rise to varieties with relatively small degrees. Therefore one may expect that in this case we find more and more basic examples than in others. 

Inspired by these considerations, we concentrate  in \S\ref{sec:hypercase}  
on this class of normal and geometrically linearly normal (see \S \ref {ssec:linorm}) Bronowski varieties (which include normal OADP varieties). They can naturally be subdivided in two subclasses, according to the fact that the map $\bar \tau:E\map V$ is either linear (case $(H1)$) or quadratic (case $(H2)$). In the former case the classification is easy: only rational normal scrolls belong to class $(H1)$. The case $(H2)$ is more complicated and, on the other hand, much more interesting. We first show that the map $\bar \tau:E\map V$ is quadro--quadric (see Proposition \ref {prop:22}). Then we prove our main result to the effect that in the $(H2)$ case the map
$\tau^{-1}:\p^n\map X$ is given by a $(2n+1)$--dimensional linear system of cubics having singular base points at the base locus scheme of $\bar\tau^{-1}:V\map E$ (see  Theorem \ref{th:hyperplanar}). This implies that  $X$ is $3$--covered by rational normal cubics in the sense of \cite{PR} and that 
$X$ is OADP (see again \cite {PR}), thus proving Bronowski's conjecture in this case. 
To the best of our knowledge, this is  the first general result concerning (what we consider to be) a meaningful class of OADP varieties of any dimension. 

In  \cite {PR} and \cite{PR2} several results are proved for irreducible, non--degenerate varieties $X\subset \p^ {2n+1}$ of dimension $n$ which are 3-covered by rational normal cubics.  In view of Theorem \ref {th:hyperplanar}, all of them can be applied to normal, geometrically linearly normal, Bronowski varieties $X\subset \PP^{2n+1}$ presenting case $(H2)$.  For the reader's convenience, we summarize these applications in \S  
\ref {ssec:examples}. First of all, the birational map
$\bar \tau:E\map V$ is \emph{involutory}, i.e. it coincides with its inverse up to linear transformations (see Corollary \ref {cor:cone}), thus launching a bridge between OADP varieties and the classification of involutory quadratic
Cremona transformations. In \cite{PR} the connection of the varieties treated there with 
the theory of Jordan algebras has been elucidated. Indeed, as a consequence of the results of  \cite {PR, PR2},  the classification of the following  items  turns out to be equivalent: non--degenerate varieties $X\subset \p^ {2n+1}$ of dimension $n$ which are 3-covered by rational normal cubics; involutory quadro--quadric Cremona transformations of $\p^ {n-1}$; complex, cubic, unitary Jordan algebras $\mathbb J$ of dimension  $n$, via the construction of the associated twisted cubic $X_\mathbb J$,  which turns out to be a variety $X\subset \p^ {2n+1}$ of dimension $n$,  3-covered by rational normal cubics (see \S \ref{Jalg} for details). In view of our Theorem \ref {th:hyperplanar},
normal, geometrically linearly normal, Bronowski varieties naturally fit in this framework. Moreover one may conjecture that every twisted cubic associated to a Jordan algebra as above is normal so that all these items should in fact be equivalent.

In this framework, smooth Bronowski varieties presenting case $(H2)$ correspond
 to semi--simple Jordan algebras and to Cremona transformations with smooth base locus. The first aspect is taken care of by  \cite[Theorem 5.7]{PR}. The second aspect basically by Ein--Shepherd-Barron's classification of simple quadro--quadric Cremona  transformations (see \cite {esb}). 
The third by Jordan--von Neumann--Wigner classification theorem of simple, complex, unitary Jordan algebras (see \cite[p. 49] {mccrimmon}).  As pointed out above, these three aspects are equivalent. 
The result is that the only smooth, linearly normal, Bronowski varieties presenting case $(H2)$ are the  \emph{lagrangian grassmannians} and the Segre embedding of $\p^ 1$ times a smooth quadric (see \S \ref {ssec:laggrass}). The  representation of lagrangian grassmannians in terms of linear systems of cubic hypersurfaces singular along degenerate \emph{Severi varieties} (see \cite {Zak2}) is explained 
in  \S  \ref {Jalg}.  In \S \ref {ssec:3-4} we mention 
the cases of dimension 3 and 4. 

In conclusion, we hope that this paper could set up a useful framework for attacking the general  classification of OADP varieties, or at least of important classes of them, pointing to a boundedness result as conjectured above. Moreover, proceedings further on this route, may lead to a proof of Bronowski's conjecture. 
\medskip

{\bf Acknowledgements}. It is a pleasure for the authors to dedicate this paper to their friend and colleague Fabrizio Catanese. 

We thank Luc Pirio for many useful remarks and comments which improved the exposition and 
the referee for pointing out a lot of misprints in a previous version.

\section{Preliminaries}

\subsection{The secant variety}\label{secsub}  Let $X\subset \PP^r$ be a projective scheme over $\mathbb C$.
We let $\langle X\rangle$ be the \emph{span} of $X$ in $\PP^r$. 
If $x$ is a point of  $X$, we denote by $T_{X,x}$ the
\emph{embedded projective tangent space} to $X$ at $x$ (see \cite [p. 181]  {H}, where the notation
$T_x(X)$ is used for it).
We denote by $X_2$ the Hilbert scheme of length 2 subschemes of $X$
and by $X_{2,r}$ the subscheme of $X_2$ parametrizing  reduced subschemes.
If $X$ is reduced, we use the common terminology and say that $X$ is a \emph{variety}. 

Let  $X\subset \PP^r$  be a variety. The {\it abstract secant
variety} $\SSS(X)$ of $X$ is the Zariski closure of the incidence correspondence
\[
\{(\xi,p)\in X_{2,r}\times \PP^{r}:   p\in \langle \xi\rangle\}.
\]
The image of the projection of
${\SSS}(X)$ to $\PP^{r}$ is the {\it
secant variety} $\Sec(X)$ of $X$.  Namely, $\Sec(X)$ is the Zariski closure of the union of all \emph{secant lines} to $X$, i.e.
lines spanned by distinct points of $X$. Lines in $\Sec(X)$ which are are flat limits of proper
secant lines but  not proper secant lines 
are called \emph{improper secant lines}. A \emph{secant line}
may thus be  proper or improper.

One has the projection map $p_X:
{\SSS}(X)\to \Sec(X)$. If $X$ is an irreducible variety of dimension $n$,
then $\SSS(X)$ is irreducible and $\dim({\SSS}(X))=2n+1$, 
therefore $\Sec(X)$ is irreducible and 
$\dim(\Sec(X))\le \min \{2n+1,r\}$. The variety $X$ is said to be
{\it secant defective} if the strict inequality holds.

If $X$ is an irreducible variety of dimension $n$, \emph{Terracini's Lemma} asserts that
$X$ is secant defective if and only if, for any pair $x_1,x_2$ of general points
of $X$, one has $\dim(T_{X,x_1}\cap T_{X,x_2})> \min\{2n -r, -1\}$.

If $X\subset \PP^r$ is a scheme,
we can more generally consider  the {\it abstract big secant
scheme} of $X$, i.e. the incidence correspondence
\[
\SSS_b(X):=\{(\xi,p)\in X_2\times \PP^{r}:  p\in \langle \xi\rangle\}.
\]
The scheme theoretical image of the projection of
${\SSS}_b(X)$ to $\PP^{r}$ is the {\it big
secant scheme} $\Sec_b(X)$ of $X$. If $X$ is a variety,
$\Sec(X)\subseteq \Sec_b(X)$, with equality if $X$ is smooth, but the inclusion is in general strict.

\subsection{The dual variety} Given an irreducible variety $X\subset \PP^ r$, the \emph{dual variety} $X^*$ of $X$ is the image in ${\PP^ r}^ *$ of the \emph{conormal scheme} (or variety) of $X$, i.e. the Zariski closure $\cP_X$  in $X\times{ \PP^ r}^ *$ of the irreducible scheme
\[
\{(x,H): x\in X\setminus\Sing(X), H\in{ \PP^ r}^ *, T_{X,x}\subset H\}
\]
of dimension $r-1$. Hence $\dim(X^ *)\le r-1$ and $X\subset\p^r$ is called \emph{dual defective}
if strict inequality holds. We call $\epsilon_X=r-1-\dim(X^ *)$ the \emph{dual defect} of $X$.
The  general fibre of 
$p_2: \cP_X\to X^ *$
isomorphically projects to $\PP^ r$ to a linear space of dimension $\epsilon_X$ (see \cite {GH}). Namely, the \emph{general tangent  hyperplane} to $X$, i.e. the hyperplane corresponding to the general point of  $X^ *$,  is tangent to $X$ along a not empty open subset of a linear space of dimension $\epsilon_X$, which is called the \emph{contact locus} of the tangent hyperplane. 
The intersection of $X$ with a general tangent hyperplane is called a \emph{general tangent hyperplane section} of $X$. 
For smooth dual defective 
varieties one has $\epsilon_X\equiv n$ modulo 2 by \emph{Landman's Parity Theorem} (see \cite{Ein}).

\begin{lemma}\label{dimdual} Let $X\subset \PP^ r$ be an irreducible,
 non--degenerate variety of dimension $n$. Then $\dim(X^ *)\geq r-n$.
If $n\geq 2$ and equality holds, then  $X\subset\p^r$ 
is a singular scroll over a curve.
\end{lemma}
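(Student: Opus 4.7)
The first inequality will follow immediately from the non-degeneracy of $X$: no hyperplane contains $X$, so the contact locus of a general tangent hyperplane---a linear space of dimension $\epsilon_X$ inside $X$---is proper in $X$, giving $\epsilon_X \le n-1$ and $\dim(X^*) = r-1-\epsilon_X \ge r-n$.

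For the equality case I plan to show first that $X$ is singular, and then that it is a scroll over a curve. Singularity is immediate from Landman's Parity Theorem: the hypothesis $\dim(X^*)=r-n$ means $\epsilon_X=n-1\ge 1$, so $X$ is dual defective, and smoothness would force $n-1\equiv n\pmod 2$, which fails.

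For the scroll structure, the general tangent hyperplane is tangent along an $(n-1)$-dimensional linear subspace $L_H\subset X$, and as $H$ varies these $L$'s sweep out $X$ (given a general smooth $x\in X$, a generic hyperplane containing $T_{X,x}$ has contact locus through $x$). I need to show the family $T$ of such $L$'s is one-dimensional. Fix a general smooth $x\in X$ and consider $T_x\subset T$, the subfamily of $L$'s through $x$. Because each $L\in T_x$ is linear of dimension $n-1$ and contains the smooth point $x$ of $X$, one has $L=T_{L,x}\subset T_{X,x}$; consequently $\bigcup_{L\in T_x} L$ is contained in $T_{X,x}\cap X\subset X$, of dimension at most $n$. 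A dimension count will give $\dim T_x +(n-1)\le n$, i.e.\ $\dim T_x\le 1$. The case $\dim T_x=1$ will be excluded as follows: the union of the $L$'s through $x$ then equals $X$ by irreducibility, so every line $\overline{xy}$ with $y\in X$ lies in $X$, making $X$ a cone with vertex $x$. As $x$ is general and the vertex locus of $X$ is a closed linear subspace of $\p^r$ contained in $X$, this locus must equal $X$, forcing $X$ itself to be linear and, by non-degeneracy, $X=\p^r$---contradicting $n<r$ (ensured by $\dim(X^*)=r-n\ge 0$, since $X^*=\emptyset$ when $n=r$). Hence $\dim T_x=0$, $\dim T=1$, and $X$ is swept by a $1$-parameter family of linear $(n-1)$-planes, i.e.\ a scroll over a curve.

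The delicate step will be the above dimension count together with the cone argument ruling out $\dim T_x=1$; the key geometric inputs are the inclusion $L\subset T_{X,x}$ at any smooth point of $X$ in $L$, which forces all $L$'s through $x$ to live inside a single $\p^n$, and the classical fact that the vertex locus of a variety is a linear subspace.
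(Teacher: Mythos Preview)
Your treatment of the inequality and of singularity (via Landman) matches the paper. For the scroll conclusion the paper simply cites a classical result of Del Pezzo (and \cite[Corollary~15]{Rog}), whereas you argue directly. Your direct route is reasonable, but the dimension count ``$\dim T_x + (n-1) \le n$'' has a gap: it would require that a general point of the union $\bigcup_{L\in T_x} L$ lie on only finitely many $L\in T_x$, and this fails. All the $L$'s pass through $x$ and lie in $T_{X,x}\cong\Proj^n$, so any two of them meet in a $\Proj^{n-2}$; in fact, if $\dim T_x=k\ge 1$, then projecting from $x$ sends the (closure of the) family $T_x$ to a positive-dimensional projective family of hyperplanes in $\Proj^{n-1}$, and such a family covers all of $\Proj^{n-1}$ (a positive-dimensional projective subvariety of the dual space meets every hyperplane). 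Hence the union equals $T_{X,x}$ regardless of $k\ge 1$, and your inequality gives no information when $k\ge 2$.

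The same computation, however, repairs the argument and makes the cone step superfluous: if $\dim T_x\ge 1$, then $\bigcup_{L\in T_x}L = T_{X,x}\subset X$, which forces $X=T_{X,x}$ by irreducibility and equality of dimensions, contradicting non-degeneracy (here $n<r$ is needed, as you note). Thus $\dim T_x=0$ directly, and $\dim T=1$. Alternatively, your cone argument works verbatim once applied under the hypothesis $\dim T_x\ge 1$ rather than $\dim T_x=1$, since in either case the union is $n$-dimensional and hence equals $X$.
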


\begin{proof} 
Since $X\subset\p^r$ is non-degenerate, we have $\epsilon_X=r-1-\dim(X^*)\leq n-1$,
thus $\dim(X^*)\geq r-n$, with equality if and only if  $\epsilon_X=n-1$. Hence for $n\geq 2$  the variety
is singular by  Landman's parity theorem and the general contact locus is a linear space of dimension $n-1$. 
A classical result of Del Pezzo, \cite{dP} and \cite[Corollary 15]{Rog}, implies that $X\subset\p^r$ is a scroll over a curve.
\end{proof}

\begin{corollary}\label{tanred} Let $X\subset \PP^ r$ be an irreducible, non--degenerate
 variety of dimension $n\geq 2$. If the general tangent hyperplane section of $X$ 
is reducible, then either $X$ is a scroll over a curve, or it is a cone over the Veronese surface
 of degree 4 in $\PP^ 5$ or over one of its  projections.
\end{corollary}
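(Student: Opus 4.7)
My plan is to combine Lemma \ref{dimdual} with an induction on $n$ that reduces the problem to the surface case, where a second--fundamental--form analysis can be carried out.

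If the dual defect of $X$ is maximal, $\epsilon_X=n-1$, then $\dim(X^*)=r-n$ and Lemma \ref{dimdual} immediately yields that $X\subset\p^r$ is a (singular) scroll over a curve, and we are done. So we may assume $\epsilon_X\le n-2$, and our task becomes to show that $X$ must be a cone over $V_4\subset \p^5$ (or over one of its projections). For $n\ge 3$ I would reduce to the surface case by cutting with a general hyperplane $H_0$: by Bertini, $X':=X\cap H_0 \subset H_0\cong \p^{r-1}$ is irreducible and non--degenerate of dimension $n-1$. A tangent hyperplane $H'$ to $X'$ at a general point $x$ lifts to a tangent hyperplane $H$ to $X$ at $x$ with $H'=H\cap H_0$; the reducibility of $X\cap H$, combined with Bertini applied componentwise, gives reducibility of $X'\cap H'$. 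By induction, $X'$ is a scroll over a curve or a cone over the Veronese surface (or a projection). A standard argument using the degree of $X$ and the fibration/cone structure of $X'$ then lifts the conclusion back to $X$.

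For the base case $n=2$, suppose $X$ is not a scroll over a curve. At a general point $x\in X$, consider the second fundamental form $|\II_{X,x}|$, a linear system of degree--$2$ divisors on $\p(T_{X,x})\cong \p^1$. Its members are the tangent cones at $x$ of the tangent hyperplane sections $X\cap H$ with $H$ tangent to $X$ at $x$; by hypothesis each such section is reducible, with tangent cone splitting into two (possibly coincident) tangent directions at $x$. If $|\II_{X,x}|$ had a common base point $v\in\p^1$, then by the standard interpretation of the second fundamental form (in the spirit of Griffiths--Harris), $v$ would correspond to a line through $x$ contained in $X$; varying $x$ would then exhibit $X$ as ruled by lines, making it a scroll, contrary to our assumption. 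Hence $|\II_{X,x}|$ is base--point--free, and the classical classification (going back to Severi, cf.\ \cite{Se} and \cite{Ru}) forces $X$ to be projectively equivalent to $V_4\subset \p^5$ or to one of its projections.

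The main obstacle I foresee is the base case: the step in which base--point--freeness of $|\II_{X,x}|$ is translated into the Veronese structure of $X$ is a delicate global classification result, which I would invoke rather than reprove from scratch. A secondary subtlety lies in the inductive lifting step, where one must verify that a variety whose general hyperplane section is a scroll (resp.\ a cone over $V_4$ or its projection), together with our reducibility hypothesis on $X$ itself, forces $X$ to be of the corresponding form.
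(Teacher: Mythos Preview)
Your proposal has genuine gaps in both the base case and the inductive step.

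For $n=2$, the key problem is the final step: the references \cite{Se} and \cite{Ru} concern OADP surfaces, not surfaces with reducible general tangent hyperplane section, so they do not supply the classification you invoke. Base-point-freeness of $|\II_{X,x}|$ is a very weak condition (it holds whenever the Gauss map is non-degenerate) and does not by itself force $X$ to be a Veronese surface or a projection thereof. Moreover, the global reducibility of $X\cap H$ is not a condition on the tangent cone at $x$: any tangent hyperplane section of a surface is singular at $x$ with tangent cone a pair of (possibly equal) directions, regardless of whether the section is reducible as a curve, so your second-fundamental-form analysis never actually engages the hypothesis. The inductive lifting is likewise unjustified: if the general hyperplane section $X'$ is a ruled surface, this places only weak constraints on $X$; many threefolds that are neither scrolls over curves nor cones over the Veronese have ruled general hyperplane sections, and the ``standard argument'' you allude to would itself require the full strength of the statement being proved.

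The paper's proof, by contrast, is a one-line citation: Lemma \ref{dimdual} covers the scroll case, and \cite[Corollary~13]{Rog} (from Rogora's classification of varieties with many lines) directly furnishes the Veronese-cone alternative.
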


\begin{proof} The assertion follows from Lemma \ref {dimdual} and  \cite [Corollary 13]{Rog}.
\end{proof}

\subsection{Cremona transformations}\label{ssec:cremona}

A \emph{Cremona transformation} is a birational map $\phi: \p^r\map \p^r$. We recall some basics about them.
We assume $r>1$.  

For a scheme $B\subset\p^r$, consider $\pi:\Bl_B(\p^r)\to\p^r$, the blow--up of $\p^r$ along $B$, and let $E$ be the exceptional divisor, which is a Cartier divisor. 
The scheme $\Bl_B(\p^r)$ is irreducible and reduced, hence a complex projective variety.

Let $\phi_1:\p^r\dasharrow \p^r$ be a Cremona
transformation  of \emph{type} $(d_1,d_2)$
with inverse $\phi_2:\p^r\dasharrow \p^r$, i.e. $\phi_i$ is defined by a fixed component free \emph{homaloidal}
linear system $\mathcal L_i$ of dimension $r$ of degree $d_i$ hypersurfaces.
 If $r=2$, then $d_1=d_2$.  We denote by $B_i$ the base locus scheme
of $\mathcal L_i$, i.e. the indeterminacy locus of $\phi_i$. 
One has $d_i=1$ if and only if $d_{3-i}=1$ in which case $\phi_1$ and $\phi_2$ are \emph{projective transformations}. We will assume from now on this is not the case.

The blow--ups $\pi_i:\Bl_{B_i}(\p^r)\to \p^r$ fit in the following 
diagram of birational maps
\[
\xymatrix{
&  \Bl_{B_1}(\p^r)=\Bl_{B_2}(\p^r)\ar[dl]_{\pi_1}\ar[dr]^{\pi_2}       &\subset \quad \p^r\times\p^r             \\
  \p^r  \ar@{-->}[rr]_{\phi_1} & &\p^r              }
\]
where $\pi_i$ are the restriction of the projections to the factors.
The equality $\Bl_{B_1}(\p^r)=\Bl_{B_2}(\p^r)$ follows from the fact that 
both schemes are irreducible, reduced, and both coincide with the closure of the graph of the maps $\phi_i$ in $\p^r\times\p^r$.

Let $E_i\subset \Bl_{B_i}(\p^r)$ be the exceptional divisors.
Let $H_i\in
\vert \pi_i^*(\mathcal O_{\p^r}(1))\vert$. Consider $r+1$ global sections $f_0,\dots, f_r$ of $\cO_{\p^r}(d_1)$ generating $\mathcal L_1$ and thus defining $B_1$. They
lift to sections $\pi_1^*(f_j)$ vanishing on $E_1$ and 
generating an invertible subsheaf $\mathcal L$ of $\pi_1^*(\cO_{\p^r}(d_1))$, precisely $\mathcal L\cong \pi_1^*(\cO_{\p^r}(d_1))\otimes \cO_{\Bl_{B_i}(\p^r)}(-E_1)$.
These sections of $\mathcal L$ define a morphism  $\tilde\phi_1:\Bl_{B_i}(\p^r)\to \p^r$, whose restriction to $\pi_1^{-1}(\p^r\setminus B_1)\cong \p^r\setminus B_1$ coincides with $\phi_1$. Then $\tilde\phi_1=\pi_2$, i.e. the diagram commutes, and 
\[
\pi_2^*(\cO_{\p^r}(1))\cong \mathcal L\cong  \pi_1^*(\cO_{\p^r}(d_1))\otimes\cO_{\Bl_{B_i}(\p^r)} (-E_1).
\]

Using the additive notation for Cartier divisors, denoting by $\equiv$ the linear equivalence and exchanging the roles of $\phi_1$ with $\phi_2$, we find
\[
H_2\equiv d_1H_1-E_1,\;\;\; H_1\equiv d_2H_2-E_2.
\]
>From this we deduce  
\begin{equation}\label{secondrel}
E_1\equiv (d_1d_2-1)H_2-d_1E_2,\;\;\; E_2\equiv (d_1d_2-1)H_1-d_2E_1.
\end{equation}

  Let $C_i$ {be the scheme theoretic image via $\pi_{i}$ of $E_{3-i}$, which is also  the
{\it ramification scheme} of $\phi_i$, $1\le i\le 2$}. The  {scheme $C_1$} has the same support
as the \emph{jacobian scheme} of $f_0,\dots, f_r$, defined by $\det (\frac {\partial (f_0,\dots, f_r)}{\partial (x_0,\ldots,x_r)})=0$.
Then  $C_i\subset\p^r$ is a hypersurface of degree $d_1d_2-1$ with equation $g_i=0$.
{If $I_{B_i}$ is the saturated ideal of $B_i\subset\p^r$,
\eqref{secondrel} yields  that $C_i$  \emph {has points of multiplicity $d_{3-i}$ along} $B_i$:
by this we mean that
the partial derivatives of $g_i$ of order  $d_{3-i}-1$ belong to $I_{B_i}$.}
Moreover $h^0(\p^r,\I_{B_i}(d_i))=r+1$,
$h^0(\p^r, \I_{B_i}(d_i-1))=0$, $h^0(\p^r,\I_{B_i}(1))=0$. In particular  $B_i$ is not a
complete intersection.

Let us restrict to the case of a \emph{quadro--quadric} Cremona transformation, i.e. $(d_1,d_2)=(2,2)$
(if either one of $d_1,d_2$ is $2$, the transformation is called \emph{quadratic}). 
Then the cubic hypersurface $C_i$ has double points along $B_i$.
Every line in $\Sec_b(B_i)$ is contracted by $\pi_i$ to a point, hence it is
contained in $C_i$. We claim that $C_i=\Sec_b(B_i)$, hence $C_i$ is the unique cubic hypersurface with double points along $B_i$.
Indeed, let $L\subset\p^r$ be a line which does not meet $B_1$ and let $\phi_1(L)=C$. Then $C$ is
a conic and the plane $\Pi_2=\langle C\rangle\subset\p^r$ cuts $B_2$ in a 
length  3 subscheme which is the intersection of $C$ with $B_2$. Then $\phi_2(\Pi_2)=\Pi_1\subset\p^r$ is a plane containing $L$. The  claim follows from  the well known classification of plane quadratic transformations.

\section{OADP and  Bronowski varieties} 

\subsection {Definitions and basic properties} Let $X\subset \PP^{2n+1}$ be an irreducible, non--degenerate, variety of dimension $n$ which is not secant defective, i.e. $\Sec(X)=\PP^ {2n+1}$.
One says that $X$ is a \emph{OADP variety} (OADP stands for \emph{one apparent double point})
if the morphism
$p_X: \SSS(X)\to \PP^ {2n+1}$ is birational, i.e.  if 
there is a unique secant line to $X$ passing through a general
point of $\PP^{2n+1}$.  We will say that $X$ is a \emph{Bronowski variety} if
its \emph{general tangential projection} is birational, i.e. if $x\in X$ is a general point, the projection
$\tau_x: X\dasharrow \PP^ n$
of $X$ from $T_{X,x}$ is birational. By Terracini's lemma, a Bronowski variety is not secant defective.

An irreducible, non--degenerate, variety $X\subset \PP^r$ is called 
\emph{linearly normal} if either one of the following equivalent
properties holds:
\begin{itemize}
\item [(LN1)] there is no variety $X'\subset \PP^{r'} $ with $r'>r$ and a
linear projection $\phi: \PP^ {r'}\dasharrow \PP^ {r}$ inducing an isomorphism $\phi: X'\to X$;
\item [(LN2)] the linear system $\vert \cO_{X}(1)\vert$ has dimension $r$.
\end{itemize}

Smooth OADP varieties have been studied in \cite {OADP}. 
As pointed out in Remark 1.2 there, several properties of smooth
OADP varieties are shared by singular OADP varieties. For instance:
\begin {itemize}
\item [(P1)] OADP varieties are {linearly normal}
(the proof of \cite [Proposition 1.1] {OADP} applies with no change);
\item [(P2)] OADP varieties are Bronowski varieties (the argument of \cite [\S 3] {OADP} applies with no change). In particular they are rational;
\item [(P3)] a secant line $L$ to a OADP variety $X$ is a \emph{focal line} if and only if for
some $z\in L-L\cap X$ there is
some other secant line, different from $L$, containing $z$. This happens
if and only if for  every $z\in L$, there
are infinitely many secant lines containing $z$. The Zariski closure $F(X)$ of the union of all
focal lines is the \emph{focal locus} of $X$ and $\dim(F(X))\le 2n-1$. For the general theory of foci, see
\cite {cs}; for the specific application to secant lines to OADP varieties, see  \cite[\S 1] {OADP}.
\end{itemize}

The following assertion goes back to Bronowski in \cite {Br}, who did not provide a satisfactory proof.

\begin{conjecture}[Bronowski's conjecture]\label{conj:bron} Any Bronowski variety is OADP.
\end{conjecture}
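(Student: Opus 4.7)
The plan is to imitate the strategy used in \S\ref{sec:hypercase} for the hyperplane case: analyse the inverse $\psi:=\tau_x^{-1}\colon\p^n\map X\subset\p^{2n+1}$ of the general tangential projection, pin down the linear system that defines $\psi$, and use it to exhibit a sufficiently rich family of rational curves on $X$, from which OADP will follow by the secant-counting arguments of \cite{PR}.

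First I would fix a general $x\in X$ and write $\psi$ as a rational map defined by a linear system $\cL\subset|\cO_{\p^n}(\delta)|$ of dimension exactly $2n+1$, with base scheme $Z\subset\p^n$. Resolving $\psi$ via $\pi_Z\colon\Bl_Z(\p^n)\to\p^n$ and comparing with the blow-up $\sigma\colon\Bl_x X\to X$, one identifies the exceptional divisor $E\cong\p^{n-1}$ of $\sigma$ with a subvariety mapping through $\bar\tau$ onto the fundamental hypersurface $V\subset\p^n$ by means of the second fundamental form. The pair $(\delta,Z)$ is thus controlled by $d=\deg V\le 2^{n-1}$ and by the geometry of $\bar\tau$, extending the computation that yields $\delta=3$ and a prescribed singular base scheme in Theorem~\ref{th:hyperplanar}.

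Second, I would translate the counting of secant lines through a general $p\in\p^{2n+1}$ into a problem in $\p^n$ via $\psi$. A secant line $\langle y_1,y_2\rangle$ through $p$ that avoids $T_{X,x}$ projects to an honest line $\ell\subset\p^n$ through $\tau_x(p)$, and its $\psi$-preimage is a rational curve on $X$ of degree at most $\delta$ meeting $X$ at $y_1$ and $y_2$; conversely, the relevant lines through $\tau_x(p)$ produce via $\psi$ exactly the secant lines through $p$. This sets up a correspondence whose generic fibre over $p$ has cardinality equal to the degree of $p_X\colon\SSS(X)\to\p^{2n+1}$ and which, on the other side, is computable from the pair $(\cL,Z)$, with the hope that it equals one once the full geometric structure is understood.

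The main obstacle, and the reason this remains only a plan, is step two: controlling the interplay of secant lines with $T_{X,x}$, the focal locus $F(X)$ (property (P3)) and the base scheme $Z$ in general seems to require the finer structure of the higher fundamental forms of $X$, which in the $(H2)$ case collapse to the quadro-quadric Cremona picture of Proposition~\ref{prop:22}. For $\deg V\ge 2$ I would attempt an induction on $d$, with base case $d=1$ being Theorem~\ref{th:hyperplanar}, bootstrapping the Jordan-algebraic inputs of \cite{PR,PR2} together with the normality and linear normality hypotheses under which the $(H2)$ analysis was successfully carried out.
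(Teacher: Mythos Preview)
The statement you are attempting to prove is labeled in the paper as a \emph{conjecture}, and the paper does \emph{not} prove it. Indeed the authors write that it ``goes back to Bronowski in \cite{Br}, who did not provide a satisfactory proof'', and they treat it as open throughout. What the paper does establish is the very special case contained in Theorem~\ref{th:hyperplanar}: a \emph{normal, geometrically linearly normal} Bronowski variety presenting case $(H2)$ is $3$--covered by twisted cubics, hence OADP by \cite{PR}. There is therefore no ``paper's own proof'' to compare your proposal against.

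Your text is, as you yourself say, only a plan, and the gap you identify is exactly the one that keeps the conjecture open. The correspondence you set up in your second step is not one-to-one in general: a line $\ell\subset\p^n$ through $\tau_x(p)$ need not lift via $\psi$ to a \emph{line} in $\p^{2n+1}$, but rather to a rational curve of degree up to $\delta$, so counting such $\ell$ does not directly count secant lines through $p$. Controlling which $\ell$ give honest secant lines requires understanding how the base scheme $Z$ of $\psi$ interacts with the pencil of lines through $\tau_x(p)$, and this is precisely where the $(H2)$ analysis succeeds (because $\bar\tau$ is quadro--quadric and involutory) and where nothing is known for $\deg V\ge 2$. Your proposed induction on $d=\deg V$ has no clear inductive step: passing from $d$ to $d+1$ changes the nature of $\bar\tau$ (it is no longer a Cremona transformation of a hyperplane) and there is no analogue of the Jordan--algebra dictionary of \cite{PR,PR2} available. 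In short, your outline correctly isolates the difficulty but does not resolve it, and neither does the paper.
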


\subsection{OADP curves}  The classification of OADP curves follows right away from property (P3):

\begin{proposition}\label{prop:curves}
The only OADP curves are rational normal cubics.
\end{proposition}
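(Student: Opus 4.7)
The plan is to combine properties (P1) and (P2) with the classical double-point formula for generic projections to $\PP^2$. By (P2), the curve $X \subset \PP^3$ is rational (geometric genus $0$); by (P1) it is linearly normal; and since $X$ is non-degenerate in $\PP^3$, its degree $d$ satisfies $d \geq 3$.

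The first step is to project $X$ from a general point $z \in \PP^3$, obtaining an irreducible plane curve $C_z \subset \PP^2$ of degree $d$, birational to $X$. Thus $C_z$ has geometric genus $0$ and arithmetic genus $\binom{d-1}{2}$, so the sum of the delta invariants at its singularities equals $\binom{d-1}{2}$.

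Next, one decomposes this total delta into two contributions: the images under the projection of the singular points of $X$, which for generic $z$ contribute exactly $\delta_X$ (the delta invariant of $X$ itself); and the accidental nodes of $C_z$ coming from proper secant lines of $X$ through $z$, which by the OADP hypothesis contribute exactly $1$. Hence $\delta_X = \binom{d-1}{2} - 1$. Since $X$ is rational, $\delta_X = p_a(X)$, and Castelnuovo's bound for irreducible non-degenerate curves in $\PP^3$ gives $p_a(X) \leq \pi(d,3)$ with $\pi(3,3) = 0$ and $\pi(4,3) = 1$, while $\binom{d-1}{2} - 1 \geq 2$ for $d \geq 4$. This comparison forces $d = 3$ and $\delta_X = 0$, so $X$ is a smooth, non-degenerate, rational cubic in $\PP^3$, i.e., a rational normal cubic.

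The main obstacle is the rigorous justification that generic projection preserves the delta invariant at each singular point of $X$, so that the decomposition of the total delta of $C_z$ as $\delta_X + 1$ is valid; this is classical transversality but requires some care to invoke precisely in the possibly singular setting. An alternative route, perhaps the one alluded to by the hint to use (P3), would be to observe directly that for $d\geq 4$ the secants through a fixed smooth point of $X$ sweep out a surface entirely contained in the focal locus $F(X)$, contradicting $\dim F(X)\leq 1$; but translating this heuristic into a clean argument again comes down to controlling how many secants pass through a generic point of a secant line, i.e., to the same double-point count.
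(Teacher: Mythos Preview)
Your argument is correct, modulo the standard facts you yourself flag (that a generic projection is a local isomorphism near each singular point of $X$, and that Castelnuovo's bound applies to the arithmetic genus of a reduced, irreducible, non--degenerate curve). But it is genuinely different from, and considerably heavier than, the paper's own proof.

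The paper uses only property (P3) and no delta invariants, no double--point formula, no Castelnuovo bound. The argument is this: let $\Gamma$ be a general plane section of $X$, a set of $d$ points in linearly general position in a plane $\Pi\subset\PP^3$. If $d\ge 4$, choose four of them, $p_1,\ldots,p_4$; the secant lines $\overline{p_1p_2}$ and $\overline{p_3p_4}$ both lie in $\Pi$ and therefore meet. Since a general secant line to $X$ arises as $\overline{p_1p_2}$ for a suitable general plane, this shows that through a general point of a general secant line there passes a second secant line, i.e.\ the general secant is focal. Hence $F(X)=\Sec(X)=\PP^3$, contradicting $\dim F(X)\le 2n-1=1$ from (P3). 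Therefore $d=3$ and $X$ is a rational normal cubic.

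Your closing paragraph was heading precisely in this direction; the step you did not see is that no counting of secants is required. One does not need to know \emph{how many} secants pass through a point of a secant line---it suffices to produce a single extra one, and a general plane section furnishes this for free once $d\ge 4$. What your route does buy is the explicit identity $p_a(X)=\binom{d-1}{2}-1$, which pins down exactly which singular curves have one apparent double point and would be the natural tool if one wanted, say, to classify curves with $k$ apparent double points for small $k$.
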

\begin{proof} Let $X$ be a OADP curve in $\PP^3$ and let $d$ be its degree.
Let $\Gamma$ be the general plane section of $X$, which is in linear general position. 
Assume $d\ge 4$ and let
$p_i\in \Gamma$ be distinct points, with $1\le i\le 4$. The lines joining $p_1,p_2$ and
$p_3,p_4$ intersect,
hence the general secant line should be a focal line, which is impossible
because $\dim(F(X))=1$. 
\end{proof}

Another proof of the same result can be obtained by showing that a Bronowski curve
is a rational normal cubic, thus proving Bronowski's conjecture in dimension one. 
We do not dwell on this here.
 
\subsection{Geometric linear normality}\label{ssec:linorm}
 An irreducible, non--degenerate, variety $X\subset \PP^n$ is called 
\emph{geometrically linearly normal} if either one of the following equivalent
properties holds:
\begin{itemize}
\item [(GLN1)] there is no variety $X'\subset \PP^{r'} $ with $r'>r$ and a
linear projection $\phi: \PP^ {r'}\dasharrow \PP^ {r}$ inducing a birational morphism $\phi: X'\to X$;

\item [(GLN2)] if $p: X'\to X$ is a desingularisation, then the linear system $p^*(\vert \cO_{\PP^r}(1)\vert)$ is complete;

\item [(GLN3)] if $X'$ is smooth,  $\phi: X'\dasharrow X$ is a birational map
and $\cL$ is the strict transform  on $X'$ via $\phi$ of $\vert \cO_{\PP^r}(1)\vert$, 
then $\cL$ is \emph{relatively complete}, i.e. 
it is the complete linear system on $X'$ with
the same class in ${\rm Pic}(X')$ and with the same base point scheme as $\cL$. 
\end{itemize}
The reader may check the equivalence of the three above properties. Geometric linear normality
implies linear normality, but the converse does not hold, although it does if the variety is {normal, see the
argument in Proposition \ref{prop:normal} below}. 

A more precise information than property (P1) is available.

\begin{proposition}\label{prop:normal} Let $X\subset \PP^{2n+1}$ be a
OADP variety of dimension $n$. If $X$ is normal, then it is geometrically 
linearly normal.
\end{proposition}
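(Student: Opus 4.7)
The plan is to reduce the statement to the following general fact: every irreducible, non--degenerate, linearly normal, normal variety $Y\subset\PP^r$ is automatically geometrically linearly normal. Granting this, the conclusion follows at once from property (P1) recalled above, which guarantees that every OADP variety $X\subset\PP^{2n+1}$ is linearly normal, so in particular $h^0(X,\cO_X(1))=2n+2$.

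To establish this general implication I would proceed as follows. Fix a desingularisation $p\colon \widetilde X\to X$. Because $X$ is projective and normal and $p$ is a proper birational morphism, a standard consequence of Zariski's Main Theorem yields $p_*\cO_{\widetilde X}=\cO_X$. Combining this with the projection formula applied to the invertible sheaf $\cO_X(1)$ one obtains
\[
p_*\bigl(p^*\cO_X(1)\bigr)\cong p_*\cO_{\widetilde X}\otimes \cO_X(1)\cong \cO_X(1),
\]
and passing to global sections produces a canonical identification $H^0(\widetilde X,p^*\cO_X(1))\cong H^0(X,\cO_X(1))$ of vector spaces of common dimension $2n+2$.

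The third step is to examine the pullback map $p^*\colon H^0(X,\cO_X(1))\to H^0(\widetilde X, p^*\cO_X(1))$. This map is injective, since any section in its kernel would vanish on the dense open subset of $X$ over which $p$ is an isomorphism, and therefore on all of $X$. An injective linear map between finite dimensional vector spaces of equal dimension is an isomorphism, hence the linear system $p^*|\cO_X(1)|$ coincides with the complete linear system $|p^*\cO_X(1)|$ on $\widetilde X$. Combined with the linear normality of $X$, which identifies $|\cO_X(1)|$ with the restriction of $|\cO_{\PP^{2n+1}}(1)|$ to $X$, this is precisely criterion (GLN2) for $X$ to be geometrically linearly normal.

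I expect no substantial obstacle in this argument: the entire content lies in the interplay between the normality hypothesis, which enters only through the pushforward identity $p_*\cO_{\widetilde X}=\cO_X$, and the OADP hypothesis, which enters only through the linear normality supplied by (P1). The one point deserving minor care is making explicit why $p_*\cO_{\widetilde X}=\cO_X$ on a singular but normal variety; this follows from Zariski's Main Theorem the moment $p$ is a proper birational morphism onto an integral normal target, since the finite $\cO_X$-algebra $p_*\cO_{\widetilde X}$ agrees with $\cO_X$ on a dense open and the extension $\cO_X\hookrightarrow p_*\cO_{\widetilde X}$ is a finite birational extension of a normal ring.
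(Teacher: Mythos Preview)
Your argument is correct and rests on exactly the same two ingredients as the paper's: property (P1) (OADP $\Rightarrow$ linearly normal) together with Zariski's Main Theorem exploiting the normality of $X$. The only difference is cosmetic: you verify criterion (GLN2) by showing $p_*\cO_{\widetilde X}=\cO_X$ and applying the projection formula, whereas the paper verifies (GLN1) by taking a hypothetical $X'\subset\PP^{r'}$ with $r'>2n+1$ projecting birationally onto $X$, observing that such a linear projection is a \emph{finite} birational morphism, and invoking ZMT to conclude it is an isomorphism, in contradiction with (P1). Your route is slightly more sheaf--theoretic, the paper's slightly more geometric, but the mathematical content is identical.
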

\begin{proof} Suppose the assertion {were} not true. Then
there is a variety $X'\subset \PP^{r} $, with $r>2n+1$, and a
linear projection $\phi: \PP^ {r}\dasharrow \PP^ {2n+1}$ 
inducing a finite birational morphism $\phi: X'\to X$. Since $X$ is
normal, $\phi$ is an isomorphism by Zariski's Main Theorem, contradicting (P1).\end{proof}

\subsection{Hyperplane sections} By imitating the proof  of \cite [Proposition 4.6] {OADP}, one has:

\begin{proposition} \label{hyperlane} Let $X\subset \PP^{2n+1}$ be 
a Bronowski variety (e.g. a OADP variety) of dimension $n\geq 3$, which is not a scroll over a curve. Then:
\begin{itemize}
\item[(i)]  the general tangent hyperplane section is irreducible and rational;
\item[(ii)] if $Y$ is a desingularization of the general hyperplane section of $X$, one has
$\kappa(Y)=-\infty$.
\end{itemize}
\end{proposition}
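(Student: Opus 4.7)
The plan is to adapt the proof of \cite[Proposition~4.6]{OADP}, exploiting the birationality of the tangential projection $\tau_x:X\dasharrow\PP^n$ at a general $x\in X$. For the irreducibility claim in (i), I would invoke Corollary~\ref{tanred}: if the general tangent hyperplane section is reducible, then $X$ is either a scroll over a curve (excluded by hypothesis) or a cone, with non--empty vertex, over the Veronese surface of degree $4$ in $\PP^5$ or over one of its projections. The cone alternatives are ruled out because a Bronowski variety cannot be a cone: indeed, if $X$ were a cone with non--empty vertex $V$, then $V\subset T_{X,x}$ for every smooth $x\in X$, so $\tau_x$ would factor through the projection of $X$ from $V$ onto the base of the cone, whose dimension is at most $n-1$, contradicting the birationality of $\tau_x$ onto $\PP^n$.

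For the rationality, fix a general $x\in X$ and a general hyperplane $H\supset T_{X,x}$, and realize $\PP^n$ as a linear subspace of $\PP^{2n+1}$ disjoint from $T_{X,x}$, so that $\tau_x$ is literally the projection from $T_{X,x}$ onto $\PP^n$. Let $\Pi:=H\cap\PP^n$, a hyperplane of $\PP^n$. Then $\tau_x$ maps $X\cap H$ into $\Pi$; conversely, for a general $\pi\in\Pi$ the single preimage $\tau_x^{-1}(\pi)\in X$ lies in the linear span $\langle T_{X,x},\pi\rangle\subset H$, hence in $X\cap H$. Thus $\tau_x$ restricts to a birational map $X\cap H\dasharrow\Pi\cong\PP^{n-1}$, which combined with the irreducibility just established yields the rationality of $X\cap H$.

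For (ii), the idea is to apply upper semicontinuity of plurigenera in the family $\mathcal Y\to T$ of irreducible hyperplane sections of $X$, parametrized by a Zariski open $T\subset(\PP^{2n+1})^*$. I would apply Hironaka's theorem to the total space to obtain a simultaneous desingularization $\widetilde{\mathcal Y}\to T'$ over a dense open $T'\subset T$ with smooth fibers $\widetilde Y_t$. Choose $t_0\in T'$ corresponding to a general tangent hyperplane (such $t_0$ exists because the dual variety $X^*$ meets $T'$, thanks to the irreducibility from~(i)); by part~(i) the fiber $\widetilde Y_{t_0}$ is rational, so $P_m(\widetilde Y_{t_0})=0$ for every $m\geq 1$. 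Upper semicontinuity then makes $\{t\in T':P_m(\widetilde Y_t)=0\}$ a Zariski open non--empty subset of the irreducible $T'$, so it contains the general point, whence $\kappa(Y)=-\infty$ for $Y$ a desingularization of the general hyperplane section.

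The main obstacle is setting up the simultaneous desingularization in the last step when $X$ is singular; this is handled by Hironaka's resolution of the total space and restriction to an open subset of the parameter space over which the fibers of the resolution are smooth, after which Iitaka's classical semicontinuity of plurigenera in smooth families yields the conclusion.
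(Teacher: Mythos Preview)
Your treatment of part~(i) is correct and matches the paper's: irreducibility via Corollary~\ref{tanred} (ruling out cones because a Bronowski variety cannot be a cone), and rationality by restricting the birational tangential projection to the tangent hyperplane section.

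Part~(ii), however, has a genuine gap. You want to pick $t_0\in T'$ corresponding to a general tangent hyperplane, but no such $t_0$ exists. The universal hyperplane section $\mathcal Y\subset X\times(\PP^{2n+1})^*$ is a $\PP^{2n}$--bundle over $X$, hence smooth over the smooth locus of $X$. A general tangent hyperplane $H$ is tangent at a general (smooth) point $x\in X$, so $(x,[H])$ is a \emph{smooth} point of $\mathcal Y$; any resolution $\widetilde{\mathcal Y}\to\mathcal Y$ is therefore an isomorphism near $(x,[H])$, and the fibre $\widetilde Y_{t_0}$ remains singular at that point. Thus $[H]\notin T'$, and in fact the general point of $X^*$ lies in $T\setminus T'$. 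The justification you offer (``$X^*$ meets $T'$ thanks to the irreducibility from~(i)'') does not establish this intersection: $T'$ being dense open in $T$ says nothing about whether it meets the proper closed subvariety $X^*$.

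The paper handles precisely this difficulty. Working on a desingularisation $p\colon X'\to X$, it studies the strict transform $Z$ of the general tangent hyperplane section and, using deformation theory (the tangent space $T_{\mathcal Z,Z}$ inside the linear system), shows that $Z$ has only ordinary double points along the contact locus and no further infinitely near singularities; in particular $Z$ has canonical singularities. This is the missing ingredient: once $Z$ is known to have canonical singularities and to be rational (by~(i)), its plurigenera vanish, and upper semicontinuity in the flat family of divisors in $p^*|\cO_X(1)|$ on $X'$ forces the plurigenera of the general (smooth, by Bertini) member to vanish as well. Without controlling the singularities of $Z$ you cannot bridge from the singular special fibre to the smooth general one, and a simultaneous resolution of the total space does not do this for you.
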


\begin{proof} The irreducibility of the general tangent hyperplane section $D$ of $X$ follows from Corollary \ref {tanred}. Indeed a cone over the Veronese surface of degree 4 in $\PP^ 5$ is never  a Bronowski variety. The rationality of $D$ follows from the fact that the general tangential projection of $X$ is birational. This proves (i).

Consider a desingularisation $p: X'\to X$, let $Y$ be the proper transform on $X'$ of  $D$ and let $\cL=p^ *(\vert \cO_X(1)\vert)$.  Consider the subscheme $\cZ$
 of $\cL$ of dimension equal to $\dim(X^ *)=2n-\epsilon_X$,
  whose general point $Z$ is the strict transform on $X'$ of the general tangent hyperplane section of $X$. Then $Z$ is singular along the strict transform $P$ on $X'$ of the corresponding contact locus. 
  The tangent space $T_{\cZ,Z}\subset \cL$ has codimension $\epsilon_X+1$. By standard deformation theory (see \cite {WDV}, \S 2), $T_{\cZ,Z}$ is contained in the linear system of divisors in $\cL$ containing $P$. This shows that $Z$ has double points along $P$ with no other proper or infinitely near singularity. In particular, $Z$ has canonical singularities along $P$. The proof now proceeds as the one  of \cite [Proposition 4.6] {OADP}.  \end{proof}
  
\subsection{The fundamental hypersurface}\label{ssec:fund}  Let $X\subset \PP^{2n+1}$ be 
a Bronowski variety of dimension $n$, e.g. $X$ is  a OADP variety. Let $x\in X$ be a general point and let $\pi: \tilde X=\Bl_x(X)\to X$ be the blow--up
of $X$ at $x$, with $E\cong \PP^{n-1}$ the exceptional divisor. Set $\tildetau_x=\tau_x\circ \pi: \tilde X\dasharrow \PP^n$. The map $\tildetau_{x\vert E}$ is defined on $E$ by the linear system of quadrics given by
the \emph{second fundamental form} ${\rm II}_{X,x}$ of $X$ at $x$. In order to ease notation, we may drop the indices $X$ and/or $x$, if there is no danger of confusion. 

By \cite [(5.7)] {GH}, $\tildetau_{\vert E}$ 
birationally maps $E$ to a hypersurface $V\subset \PP^{n}$, which we call
the \emph{fundamental hypersurface} of $X$. We denote by $\delta$ its degree. 
Since $V$ is a birational projection to $\PP^n$ of the $2$--Veronese variety of $\PP^{n-1}$, one has
$1\le \delta\le 2^{n-1}$.  To ease notation we denote the inverse of $\tau$ by $\sigma$, $\tildetau_{\vert E}$ by $\bar \tau$ and by $\bar \sigma$ its inverse.

\begin{remark}\label{rem:planecase}{\rm If $X$ presents the \emph{hyperplane case}, i.e. if $V$ is a hyperplane, there are two possibilities:
\begin{itemize}
\item [(H1)] $\II_x$ has a base hyperplane and $\bar \tau: E\map V$ is a projective transformation;
by \cite [(3.21)] {GH}, if $n\ge 3$ then $X$ is a scroll over a curve;  in \S \ref {ssec:H1} we will see that the same happens also for
$n=2$;
\item [(H2)] the general quadric in $\II_x$ is irreducible and $\bar \tau: E\map V$
is a quadratic Cremona transformation.
\end{itemize} 
}
\end{remark}

 The map $\sigma$ is defined by a
linear system $\cX$
of dimension $2n+1$ of hypersurfaces of degree $d\le \deg(X)$ in $\p^n$, which is the image via $\tau$ of
the linear system of hyperplane sections of $X$. The system $\cX$ is relatively complete if and only if $X$ is geometrically linearly normal. This is the case if $X$ is a normal OADP. 
Since $\sigma$ contracts $V$ to $x\in X$ we have:

\begin{lemma}\label{lem:1}
$\cX$ has no movable intersection with $V$. 
\end{lemma}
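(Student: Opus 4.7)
The plan is to deduce the lemma directly from the statement emphasized just above it, namely that $\sigma$ contracts $V$ to the point $x\in X$. First I would briefly verify that contraction: $\tildetau = \tau\circ\pi: \tilde X \map \p^n$ is birational (blow-up composed with birational $\tau$), and by construction it restricts on $E$ to the birational map $\bar\tau: E \map V$. Hence its rational inverse $\tildetau^{-1}$ carries a general $v\in V$ to $\bar\tau^{-1}(v)\in E$, and so $\sigma(v)=\pi(\bar\tau^{-1}(v))=x$.

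Next I would pass from this contraction to the stated lemma via the standard dictionary between rational maps and linear systems. Since $\sigma:\p^n\map X\subset\p^{2n+1}$ is defined by $\cX$, its restriction $\sigma|_V$ is the rational map defined by the trace system $\cX|_V$ (after, if necessary, discarding any occurrence of $V$ as a fixed component). Since $\sigma|_V$ is the constant map to $x$, the map associated to $\cX|_V$ has $0$-dimensional image; equivalently, the movable part of $\cX|_V$ has projective dimension $0$, which is exactly the assertion that $\cX$ has no movable intersection with $V$.

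I do not anticipate a genuine obstacle. The only minor bookkeeping concerns the possibility that some members of $\cX$ contain $V$ as a fixed component, or that $\sigma$ is indeterminate along $V$; I would handle this in the standard way by resolving the indeterminacies of $\sigma$ on some blow-up $\mu:Y\to\p^n$, running the argument on the proper transform $V_Y$ via the morphism $\sigma\circ\mu$, and pushing the conclusion back down to $\p^n$.
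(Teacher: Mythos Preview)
Your proposal is correct and follows exactly the paper's approach: the paper's entire proof is the clause ``Since $\sigma$ contracts $V$ to $x\in X$ we have:'' immediately preceding the lemma, and you have simply unpacked (i) why $\sigma$ contracts $V$ to $x$ and (ii) why that contraction translates into the stated conclusion about $\cX|_V$.
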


Denote by $\cX'$ the relatively complete linear system of dimension $2n$ of hypersurfaces of degree $d-\delta$  in $\PP^n$
formed by all hypersurfaces $F$ such that $F+V\in \cX$. The image of the map $\phi_{\cX'}: \PP^n\map 
\PP^{2n}$ is the same as the one of $X$ via the \emph{internal projection} $\pi_x: X\map X'\subset \PP^{2n}$
from $x$. Hence:

\begin{lemma}\label{lem:2}
 The restriction of $\phi_{\cX'}$ to $V$ is $\bar \sigma$.
 \end{lemma}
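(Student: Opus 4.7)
The plan is to work in coordinates adapted to the internal projection $\pi_x$ and to exhibit $\phi_{\cX'}$ and $\pi_x\circ\sigma$ as the \emph{same} rational map $\PP^n\map\PP^{2n}$, then compute their common restriction to the contracted divisor $V$ by passing to the blow--up $\tilde X$. Concretely, I would first choose homogeneous coordinates $[y_0:\cdots:y_{2n+1}]$ on $\PP^{2n+1}$ with $x=[0:\cdots:0:1]$, so that $\pi_x$ is the projection onto the first $2n+1$ coordinates. In the basis $s_0,\ldots,s_{2n+1}$ of $\cX$ dual to this choice one has $\sigma=[s_0:\cdots:s_{2n+1}]$, and the $2n$--dimensional subsystem spanned by $s_0,\ldots,s_{2n}$ is exactly the system of hyperplane sections of $X$ through $x$ pulled back via $\tau$. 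Since $\sigma$ contracts $V$ to $x$, each $s_i$ for $i\le 2n$ vanishes identically on $V$, so one may write $s_i=V\cdot g_i$ with $g_i$ of degree $d-\delta$; by the very definition of $\cX'$, the forms $g_0,\ldots,g_{2n}$ constitute a basis of $\cX'$, and therefore $\phi_{\cX'}=\pi_x\circ\sigma$ as rational maps $\PP^n\map\PP^{2n}$.

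The subtle point -- and in my view the main obstacle -- is that $\pi_x\circ\sigma$ is \emph{totally indeterminate} along $V$, because $\sigma$ collapses $V$ to the very center $x$ of $\pi_x$, whereas $\phi_{\cX'}$ is generically well defined on $V$ (the $g_i$ do not share $V$ as a common zero). So the lemma is really asking for the limit of $\pi_x\circ\sigma$ along $V$. To compute this limit I would pass to $\tilde X$: the inverse of $\tildetau=\tau\circ\pi$ provides a rational map $\tilde\sigma:\PP^n\map\tilde X$ with $\sigma=\pi\circ\tilde\sigma$ and, by construction, $\tilde\sigma|_V=\bar\sigma:V\map E$. Meanwhile, $\pi_x$ corresponds on $X$ to the linear system of hyperplane sections through $x$, which on $\tilde X$ becomes the base--point--free system $|\pi^*H-E|$ (with $H$ a hyperplane section of $X$), so $\pi_x$ lifts to a morphism $\tilde\pi_x:\tilde X\to\PP^{2n}$.

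Finally, the restriction $\tilde\pi_x|_E$ is cut out by $\cO_{\tilde X}(\pi^*H-E)|_E\cong\cO_E(1)$, i.e.\ by the complete linear system $|\cO_E(1)|$, and so realizes the linear embedding $E\cong\PP(T_{X,x})\hookrightarrow\PP^{2n}$. The identity $\phi_{\cX'}=\tilde\pi_x\circ\tilde\sigma$ of rational maps $\PP^n\map\PP^{2n}$, valid on $\PP^n\setminus V$ by the first paragraph and hence everywhere, yields upon restriction to $V$
\[
\phi_{\cX'}|_V=\tilde\pi_x|_E\circ\bar\sigma,
\]
which is exactly the claim $\phi_{\cX'}|_V=\bar\sigma$ modulo the tacit identification of $E$ with its linear image $\tilde\pi_x(E)=\PP(T_{X,x})\subset\PP^{2n}$.
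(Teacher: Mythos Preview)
Your argument is correct and follows the same idea the paper sketches in the sentence immediately preceding the lemma, namely that $\phi_{\cX'}$ is nothing but the composition $\pi_x\circ\sigma$, where $\pi_x$ is the internal projection of $X$ from $x$. The paper does not actually spell out why this composition restricts to $\bar\sigma$ on $V$; it just writes ``Hence:'' and states the lemma. You have supplied exactly the missing step: since $\sigma$ collapses $V$ to the centre $x$ of $\pi_x$, one must resolve the indeterminacy by lifting to $\tilde X=\Bl_x(X)$, where $\sigma$ becomes $\tilde\sigma$ sending $V$ birationally onto $E$ via $\bar\sigma$, and $\pi_x$ becomes the morphism $\tilde\pi_x$ given by $|\pi^*H-E|$, whose restriction to $E$ is the linear embedding $E\cong\PP^{n-1}\hookrightarrow\PP^{2n}$. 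So your proof is a careful unpacking of the paper's one--line justification rather than a different route.

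One small caveat: calling $\tilde\pi_x:\tilde X\to\PP^{2n}$ a \emph{morphism} is slightly imprecise if $X$ is singular, since $|\pi^*H-E|$ may have base points over $\Sing(X)$. But $x$ is a general (hence smooth) point, so $\tilde\pi_x$ is certainly a morphism in a neighbourhood of $E$, which is all you use.
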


Denote by $\cX''$ the linear system of hypersurfaces of degree $d-2\delta$ in $\PP^n$
formed by all hypersurfaces $F$ such that $F+2V\in \cX$. The movable part of $\cX''$ is 
the linear system of all hyperplanes of $\PP^n$, corresponding, via $\tildetau$ to the
tangent hyperplane sections of $X$ at $x$. In conclusion:

\begin{lemma}\label{lem:3}
The fixed part, if any, of $\cX''$ consists of  the 
exceptional divisors corresponding to the indeterminacy locus of $\tildetau$ off $x$.
Moreover $d\ge 2\delta+1$ with equality if and only if $\cX''$ has no fixed part.
\end{lemma}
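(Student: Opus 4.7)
The plan is to compute $\sigma^*(H)$ for $H$ a general tangent hyperplane section of $X$ at $x$ and to read off the fixed/movable decomposition of $\cX''$ from the answer. First I would observe that a divisor $D=\sigma^*(H)\in\cX$ contains $V$ with multiplicity $\ge 2$ precisely when $\mathrm{mult}_x(H)\ge 2$: since $\sigma$ contracts $V$ onto the smooth point $x\in X$, the multiplicity of $V$ in $\sigma^*(H)$ equals $\mathrm{mult}_x(H)$, and the latter is $\ge 2$ exactly when the hyperplane cutting out $H$ contains $T_{X,x}$, i.e.\ when $H$ is tangent at $x$. The tangent hyperplane sections at $x$ form an $n$-dimensional linear subsystem of $|\cO_X(1)|$, so $\cX''$ is $n$-dimensional, in agreement with the $n$-dimensional movable part stated just before the lemma.

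Next, take a birational morphism $\mu:\hat X\to\tilde X$ such that $\hat\tau:=\tildetau\circ\mu:\hat X\to\p^n$ is a morphism. Denote by $F_1,\dots,F_s$ the exceptional divisors of $\mu$ lying over the irreducible components $B_1,\dots,B_s$ of the indeterminacy locus of $\tildetau$ off $E$, and set $W_i:=\hat\tau(F_i)\subset\p^n$. Those $W_i$ that are hypersurfaces in $\p^n$ are precisely the exceptional divisors corresponding to the indeterminacy of $\tildetau$ off $x$ referred to in the statement. Note that $B_i\subset T_{X,x}\cap X\setminus\{x\}$, since $\tau$ is the projection from $T_{X,x}$, and that the strict transform $\hat E$ of $E$ still maps birationally onto $V$ under $\hat\tau$.

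For $H$ a general tangent hyperplane section, $\mathrm{mult}_x(H)=2$, so $\pi^*(H)=\tilde H+2E$ on $\tilde X$, hence
\[
(\pi\circ\mu)^*(H)=\hat H+\sum_i e_iF_i+2\hat E,
\]
where $\hat H$ is the strict transform of $\tilde H$ and $e_i:=\mathrm{mult}_{B_i}(\tilde H)\ge 1$: the inequality holds because $B_i\subset T_{X,x}$ is contained in the hyperplane cutting out $H$. Since $\sigma\circ\hat\tau=\pi\circ\mu$ as morphisms $\hat X\to X$, the projection formula gives $\sigma^*(H)=\hat\tau_*((\pi\circ\mu)^*(H))$. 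Using that $\hat H$ lies in the linear system defining $\hat\tau$ (so it maps birationally onto a hyperplane $L_H\subset\p^n$), that $\hat E$ maps birationally onto $V$, and that $F_i\to W_i$ has some degree $m_i\ge 1$ when $W_i$ is divisorial, one obtains
\[
\sigma^*(H)=L_H+\sum_i c_iW_i+2V,\qquad c_i:=e_im_i\ge 1.
\]

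Subtracting $2V$ exhibits the element of $\cX''$ associated to $H$ as $L_H+\sum_i c_iW_i$: as $H$ varies, the hyperplane $L_H$ sweeps out the full $n$-dimensional system of hyperplanes of $\p^n$, while the divisor $\sum_i c_iW_i$ is independent of $H$. This identifies the fixed part of $\cX''$ with the $W_i$ (counted with multiplicity $c_i$), proving the first assertion. Taking degrees yields $d-2\delta=1+\sum_i c_i\deg(W_i)\ge 1$, so $d\ge 2\delta+1$, with equality iff there are no divisorial $W_i$, iff $\cX''$ has no fixed part. The main technical subtlety is to check that any exceptional components of $\mu$ lying \emph{over} $E$ (arising if $\bar\tau:E\dashrightarrow V$ has base points) push forward into $V$ and are thus absorbed into the $2V$ coefficient rather than producing spurious fixed components of $\cX''$.
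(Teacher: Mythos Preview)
The paper does not give a separate proof of this lemma; it is stated as an immediate consequence of the paragraph preceding it, which simply identifies the movable part of $\cX''$ with the linear system of hyperplanes of $\PP^n$ corresponding to the tangent hyperplane sections at $x$. Your argument is therefore far more detailed than the paper's, and your strategy---resolving $\tildetau$ and tracking the divisor $(\pi\circ\mu)^*(H)$ under pushforward by $\hat\tau$---is a perfectly sound way to make the assertion precise. In particular, your identification of $\mathrm{mult}_V(\sigma^*(H))$ with $\mathrm{mult}_x(H)$, and the ensuing degree inequality $d\ge 2\delta+1$, are correct.

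There is, however, a genuine loose end, and your proposed fix for it is not right. If $G$ is an exceptional divisor of $\mu$ lying over a base point of $\II_x$ on $E$, then $G$ \emph{cannot} push forward onto $V$: since $\hat\tau$ is a birational morphism and $\hat E$ already maps birationally onto $V$, no other irreducible divisor of $\hat X$ can dominate $V$. So either $\hat\tau_*(G)=0$ (the divisor is contracted), or $\hat\tau_*(G)$ is a hypersurface of $\PP^n$ distinct from $V$, in which case it would appear in the fixed part of $\cX''$ with a positive coefficient and would \emph{not} be ``absorbed into the $2V$ coefficient''. Your displayed formula for $(\pi\circ\mu)^*(H)$ already omits these $G$-terms, so the gap is present in the computation itself, not only in the final remark. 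To close it you would need to argue that each such $G$ is contracted by $\hat\tau$, or else reinterpret the lemma's phrase ``indeterminacy locus of $\tildetau$ off $x$'' as encompassing the full indeterminacy locus of $\tildetau$ on $\tilde X$ (both on and off $E$). The second assertion of the lemma---the inequality $d\ge 2\delta+1$ and its equality case---is unaffected by this issue, since any additional fixed components only strengthen the inequality.
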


\begin{remark}\label{rem:cremona} 
{\rm
If $X$ is not a scroll and $p\in \PP^n$ is a general point,
the linear system $\cX_p$ of all hypersurfaces in $\cX$ which are singular at $p$,
is a homaloidal system, i.e. it defines a Cremona transformation of $\PP^n$.
}
\end{remark}

\section{Verra varieties}\label{sec:verra}

We present now a construction of OADP varieties following an idea of A. Verra.

Let  $Y\subset {\PP}^{2n+1}$ be a degenerate OADP variety of dimension $m<n$,
which spans a linear space $V$ of dimension $2m+1$. 
Take a linear space $W\subset \PP^ {2n+1}$ 
of dimension $2(n-m)-1$ such that $V\cap W=\emptyset$.
Let $C_W(Y)$ be the \emph{cone}
over $Y$ with vertex $W$. Let $X\subset C_W(Y)$ be an irreducible, non--degenerate, not secant defective variety of dimension $n$, which intersects the general ruling $\Pi\cong \PP^ {2(n-m)}$ of $C_W(Y)$ 
along a linear subspace $P$ of dimension $n-m$.
This implies that:
\begin{itemize}
\item [(A1)]  the projection
$p: \PP^ {2n+1}\dasharrow V$ of $\PP^ {2n+1}$ 
from $W$ to $V$ restricts to $X$ to a dominant map $p_{\vert X}: X\map Y$;

\item [(A2)] if $P_i$, $1\le i\le 2$, are the closures of two general fibers of $p_{\vert X}$, then
$P_1\cap P_2=\emptyset$.
\end{itemize}

Indeed, (A1) is clear, and (A2) follows, via Terracini's Lemma, from the fact that $X$ is not secant defective. 
We will call these varieties  \emph{Verra varieties}.

\begin{example} {\rm Let $Y$ be as above. 
Consider a rational map $f: Y\map \G(n-m-1, W)$ such that,
if $x,y\in Y$ are general points, then $f(x), f(y)$ are skew 
linear subspaces of $W$. Set
\[
X:=\overline { \cup_{x\in Y_0}\langle x,f(x)\rangle }
\]
where $Y_0\subset Y$ is the open subset where $f$ is defined. Then $X$ is a Verra
variety.}
 \end{example}

\begin{proposition} [A. Verra] \label{prop:verra}  Verra varieties are OADP.
\end{proposition}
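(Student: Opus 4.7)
The plan is to establish OADP of $X$ by reducing, through the linear projection $p\colon\PP^{2n+1}\map V$ from $W$, to OADP of $Y$. Since $X$ is non-degenerate and non secant defective with $\dim X=n$, one already has $\Sec(X)=\PP^{2n+1}$, so it suffices to exhibit, through a general $z\in\PP^{2n+1}$, a unique proper secant line to $X$.

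Given such a general $z$, I would first set $v:=p(z)\in V$, which is itself a general point of $V$. By the OADP property of $Y\subset V=\PP^{2m+1}$, there is a unique proper secant line $L_Y=\langle y_1,y_2\rangle$ to $Y$ through $v$. Next, consider the linear span $\Sigma:=\langle W,L_Y\rangle$ of dimension $2(n-m)+1$, which contains $z$ because $p(z)\in L_Y$, together with the two rulings $\Pi_i:=\langle W,y_i\rangle\subset\Sigma$ of dimension $2(n-m)$. By hypothesis the intersections $P_i:=X\cap\Pi_i$ are linear subspaces of dimension $n-m$, and by (A2) they are disjoint. Thus $P_1$ and $P_2$ are disjoint complementary linear subspaces of $\Sigma$, i.e.\ $\dim P_1+\dim P_2+1=\dim\Sigma$. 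It is elementary (projecting from $P_1$ identifies $P_2$ with the target $\PP^{n-m}$) that through any point of $\Sigma\setminus(P_1\cup P_2)$ there passes a unique line meeting both $P_1$ and $P_2$. Applied to $z$, this yields a unique line $\ell\subset\Sigma$ joining a point of $P_1\subset X$ to a point of $P_2\subset X$ and passing through $z$, giving existence of a secant line to $X$ through $z$.

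For uniqueness, let $\ell'=\langle x_1,x_2\rangle$ be any secant line to $X$ through $z$, with $x_1\ne x_2$ in $X$. The key step is to verify that $p$ does not contract $\ell'$: otherwise $\ell'$ would lie in a single ruling of $C_W(Y)$ and hence $z\in C_W(Y)$, which is impossible since $\dim C_W(Y)=m+\dim W+1=2n-m<2n+1$, so $C_W(Y)$ is a proper subvariety and the general $z$ avoids it. Therefore $p(\ell')$ is a secant line to $Y$ through $v$, forcing $p(\ell')=L_Y$ by OADP of $Y$; then $\{p(x_1),p(x_2)\}=\{y_1,y_2\}$, so $\ell'\subset\Sigma$ and $\ell'$ meets both $P_1$ and $P_2$. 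The uniqueness step of the previous paragraph gives $\ell'=\ell$, as required.

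The only mildly delicate point is ruling out secant lines contained in a single ruling of $C_W(Y)$, which requires the mild observation $\dim C_W(Y)<2n+1$ so that $z\notin C_W(Y)$ for the general $z$. Everything else is linear algebra inside $\Sigma$ together with the OADP hypothesis on $Y$; the proof in fact reveals the recursive flavour of Verra's construction, in which the OADP property is propagated from the smaller OADP variety $Y$ to the $n$-dimensional Verra variety $X$ built over it.
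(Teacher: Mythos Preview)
Your proof is correct and follows essentially the same approach as the paper's: project from $W$, use the OADP property of $Y$ to pin down the unique secant line $L_Y$ in $V$, pass to the linear span $\Sigma=\langle W,L_Y\rangle$, and reduce to the elementary fact about a unique transversal line to two complementary linear subspaces. You are in fact more careful than the paper in explicitly ruling out secant lines contracted by $p$ (via $\dim C_W(Y)=2n-m<2n+1$), a point the paper leaves implicit.
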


\begin{proof} Let $X$ be a Verra variety.  Let $x\in \PP^ {2n+1}$ be a general point, so that $y=p(x)$
is a general point of $V$. A secant line to $X$ through $x$ is a general secant line to
$X$ and projects to a general secant line to $Y$ passing through $y$. Since $Y$ is OADP,
there is only one such secant line $L$ intersecting $Y$ at two points $p_1,p_2$. 
Hence all secant lines to $X$ 
through $x$ lie in the linear space $Z=\langle L\cup W\rangle$ of dimension $2(n-m)+1$, 
which intersects $X$ along the two linear spaces $P_i\subseteq \langle p_i, W\rangle$, 
$1\le i\le 2$, of dimension $n-m$,  whose union spans
$Z$. The assertion follows, since there is only one secant line to $P_1\cup P_2$ passing through
$x\in Z$. \end{proof}

\begin{remarks}\label{rem:verra} {\rm (i) Starting the construction with $Y$ secant defective,
one finds $X$ secant defective.

(ii) Verra varieties are in general singular. For instance, the only smooth Verra surface is 
the rational normal scroll $S(1,3)$ in $\PP^5$ (for the notation regarding scrolls, see \cite {OADP}). It is a nice question, on which we will not dwell here,  to classify all smooth Verra varieties. Even more interesting is the question of which Verra varieties are smoothable. }
\end{remarks}

\section{Singular OADP surfaces}\label{sec:singularOADP}

{OADP surfaces with finitely many singular points} have been classified 
by Severi in \cite {Se} (but his proof was incomplete) and then by Russo \cite {Ru} 
(see also \cite {OADP, violo}).  They are only rational normal scrolls and del Pezzo surfaces of degree 5.  In this section we will prove the following classification which has been announced also by F. Zak. 

\begin{theorem}\label{thm:classif} Let $X$ be a OADP surface. 
Then $X$ is either a smooth rational normal scroll, or a (weak) del Pezzo surface of degree 5 or a Verra surface.
\end{theorem}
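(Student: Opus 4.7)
The Severi--Russo classification \cite{Ru, OADP} covers OADP surfaces with at most finitely many singular points, yielding rational normal scrolls and (weak) del Pezzo quintics. My plan is therefore to assume $\dim \Sing(X) = 1$ and show that $X$ must be a Verra surface.

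First I would prove that $X$ is a scroll over a curve. By Corollary \ref{tanred}, if the general tangent hyperplane section of $X$ is reducible then $X$ is either a scroll over a curve, a cone over the Veronese surface, or a cone over one of its projections; the last two possibilities are excluded, for dimension reasons (a cone over a $2$-dimensional variety has dimension at least $3$, so cannot be our surface in $\PP^5$) and because the Veronese surface itself and its smooth projections have at most finitely many singular points, contradicting $\dim \Sing(X) = 1$. It is thus enough to rule out an irreducible general tangent hyperplane section. For this I would adapt the argument of Proposition \ref{hyperlane} to the excluded case $n = 2$: the birational tangential projection $\tau : X \map \PP^2$ produces a rational desingularization of the general tangent hyperplane section, and the adjunction and double-point analysis of \cite{Ru, OADP} then forces $X$ to have only finitely many singular points, contrary to our assumption.

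With $X$ a scroll over a curve $B$ with ruling lines $\{L_y\}_{y \in B}$, the essential step is to produce a line $W \subset \PP^5$ met by every ruling $L_y$. This is the characterizing datum of a Verra surface in $\PP^5$: in the construction of \S \ref{sec:verra} with $m = 1$, each ruling sits in a plane of the form $\langle y, W \rangle$ and hence meets $W$. I would construct $W$ by analyzing the $1$-dimensional singular locus $\Sing(X)$ in the scroll $X$: either the rulings already pass through a common line (which is then $W$), or $X$ has a double-curve singularity whose geometry, when combined via Terracini's Lemma and the focal analysis of property (P3) with the OADP condition, forces the existence of such a transversal $W$.

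Once $W$ is found, the projection $\pi_W : \PP^5 \map V \cong \PP^3$ from $W$ contracts each ruling $L_y$ to a point and sends $X$ dominantly onto a curve $Y \subset V$. The OADP property of $X$ descends to $Y$: through a general $p \in \PP^5$, the unique secant line of $X$ is spanned by points on two distinct rulings $L_{y_1}, L_{y_2}$, and its image under $\pi_W$ is the unique secant line of $Y$ through $\pi_W(p) \in V$. By Proposition \ref{prop:curves}, $Y$ is a rational normal cubic in $V \cong \PP^3$; since $X \subset C_W(Y)$ meets every ruling plane $\langle y, W \rangle$ in the line $L_y$, this exhibits $X$ as a Verra surface. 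The main obstacle in this plan is the production of $W$: one must carry out the delicate analysis of the admissible $1$-dimensional singular configurations for an OADP scroll in $\PP^5$ and use the OADP property to rule out those incompatible with the Verra structure.
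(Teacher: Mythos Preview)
Your outline reaches the right endpoint, but two of its key steps are unjustified, and the paper's proof bypasses both by a different route.

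First, your argument that the general tangent hyperplane section must be reducible is circular. You want to rule out the irreducible case by invoking the adjunction and double--point analysis of \cite{Ru, OADP} to conclude that $X$ would then have only finitely many singular points. But that analysis (and the paper's own treatment of the case $\dim\Sing(X)=0$) relies on $T_{X,x}\cap\Sing(X)=\emptyset$ for general $x$, which is precisely what fails when $\dim\Sing(X)=1$: as the paper in fact shows, the singular curve meets every general tangent plane. So you cannot bootstrap Severi--Russo to deduce finiteness of $\Sing(X)$, and you are left with no independent reason why the tangent section should be reducible, i.e.\ why $X$ should be a scroll. Second, you correctly flag the production of the transversal line $W$ as ``the main obstacle'' but supply no mechanism for it beyond a vague appeal to focal analysis.

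The paper dissolves both difficulties simultaneously by analysing the singular curve directly. Let $Z$ be an irreducible component of $\Sing(X)$. The key point is that $Z$ is contracted by the general tangential projection $\tau_x:X\dasharrow\PP^2$: on a resolution $\pi:X'\to X$ the scheme $\pi^{-1}(z)$ has length $\ge 2$ for general $z\in Z$, and Zariski's Main Theorem applied to the birational morphism $\tau_x\circ\pi:X'\to\PP^2$ forces the proper transform of $Z$ to collapse. A short span argument (the span of $Z$ together with $T_{X,x}$ has dimension $3$, and Terracini's Lemma rules out $\dim\langle Z\rangle=2$) then shows that $Z$ is itself a \emph{line} meeting $T_{X,x}$ for general $x$. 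Now project from $Z$, so that $Z$ plays the role of your sought--for $W$: the image is a curve $Y\subset\PP^3$, and an elementary secant count shows both that the general fibre is a line and that $Y$ is OADP, hence a twisted cubic by Proposition~\ref{prop:curves}. This exhibits $X$ as a Verra surface and proves at the same stroke that $X$ is a scroll over a curve --- one never needs to know that in advance.
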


\begin{proof} Assume first  $\dim(\Sing(X))=1$. 
Let $Z$ be an irreducible curve contained in $\Sing(X)$. It is not contained in $T_{X,x}$ for
$x\in X$ general, otherwise $X$ would be a cone, hence defective.

\begin{claim}\label{claim1} The general tangential projection $\tau_x$ maps $Z$ to a point.
\end{claim}
\begin{proof} Let $\pi: X'\to X$ be a minimal resolution of the singularities of $X$ and of $\tau_x$ so that $\tau'_x=\tau_x\circ \pi: X'\to  \PP^2$ is a birational morphism. Let $Z'$ be the proper transform of $Z$ via $\pi$ and let $z\in Z$ be a general point. 
The pull--back scheme $z'$ of $z$ via $\pi$ is finite of length at least 2, and it is mapped by $\tau_x'$ to the point $y=\tau_x(z)$. By Zariski's Main Theorem, there is an irreducible curve $C_z$ on $X'$ containing $z'$ contracted to $y$ by  $\tau_x'$. The curve $C_z$ does not depend on $z$, otherwise $\tau'_x$ would not be birational. Thus $C_z=C$, with $C$ a fixed curve. On the other hand, $C$ contains $z'$, which describes the whole $Z'$ as $z$ moves on $Z$. Thus $C=Z'$. This proves the Claim.
\end{proof}

\begin{claim}\label{claim2} $Z$ is a line which has non--empty intersection with the tangent planes to $X$ at smooth points.
\end{claim}

\begin{proof} Set $V=\langle Z\rangle$. By Claim \ref {claim1}, $T_{X,x}\cup V$ has dimension 3 for $x\in X$ general. Hence $\dim(V)\le 3$ and the equality does not hold, otherwise we would have $T_{X,x}\subset V$ for $x\in X$ general. If $\dim(V)=2$, then $T_{X,x}\cap V$ would be a line, hence two general tangent planes to $X$ would intersect and, by Terracini's lemma,  $X$ would be secant defective, a contradiction.  This proves the Claim.
\end{proof} 

Let $W\subset \PP^5$ 
be a linear space of dimension 3 such 
that $V\cap W=\emptyset$ and let $f: \PP^5\dasharrow W$ 
be the projection from $V$. Since the general tangent plane to $X$ 
intersects $V$ at a point, the image of $X$ via $f$ is a curve $Y$ which spans $W$. 
Let $p\in \PP^5$ be a general point and set $q=f(p)$ which is a general point of $W$.
 Let $L$ be a secant line to $Y$ through $q$, meeting $Y$ at two points $p_1,p_2$. Let $F_i$ be the fibre of $p_i$ via $f$, which is a curve in the plane $P_i=\langle p_i,V\rangle$, for $1\le i\le 2$. Look at $T=\langle L\cup V\rangle$, which has dimension 3, and contains $p, P_1, P_2$. If $F_1,F_2$ have degree larger than 1, there would be more than one secant line to $F_1\cup F_2$ passing through $p$, which is not possible. Thus the general fibre of $f_{\vert X}: X\dasharrow Y$ is a line. By a similar argument, one sees that $Y$ has to be a OADP curve, hence a rational normal cubic. In conclusion, $X$ is a Verra surface. 

Assume next $\dim(\Sing(X))=0$, which yields $T_{X,x}\cap \Sing(X)=\emptyset$ for $x\in X$ general, otherwise $X$ would be a cone, hence defective.
Since the general tangential projection $\tau_x: X\dasharrow \PP^2$ is birational, the argument  of \cite
[Proposition 4.7] {OADP} applies to show that
\begin{itemize}
\item [(i)] either  $X$ is a rational scroll, or
\item [(ii)] the general hyperplane section $C$ of $X$ is irreducible of geometric genus $1$. 
\end{itemize}
 
Let $\pi: X'\to X$ and $\tau'_x: X'\to \p^2$ be as above. Any curve contracted by $\pi$ is also contracted by $\tau'_x$, hence it is rational. This implies
$h^1(X,\cO_X)=h^1(X',\cO_{X'})=0$. 

In case (i) the set theoretic intersection of $X$ with $T_{X,x}$ 
consists only of the ruling $L$ of $X$ passing through $x$, i.e. 
there are no isolated intersection points of  $X$ with $T_{X,x}$ off $L$ 
(see  \cite  [Proposition 6.3] {OADP}, which holds, even if $X$ is singular,
 provided $h^1(X,\cO_X)=0$ and the general curve $C\in \vert \mathcal M\vert$ 
is contained in $X\setminus \Sing(X)$). Furthermore, the general tangent curve section 
with a hyperplane through $T_{X,x}$ consists of $L$ plus an irreducible curve $C$  which is smooth at $x$ and its tangent line is not fixed. If $d$ is the degree of $X$, then $C$ has degree $d-1$ and $\tau_x(C)$ has degree $d-3$. On the other hand $\tau_x(C)$ is a line. Thus $d=4$ and $X$ is a rational normal scroll. 

In case (ii) the same argument proves that $d=5$ and $X$ is a (weak) del Pezzo surface.\end{proof}

\section{The hyperplane case}\label{sec:hypercase}

In  this section we take up the classification of Bronowski (in particular OADP) varieties, presenting {\it the hyperplane case}, with the two subcases $(H1)$ and $(H2)$ indicated in Remark \ref {rem:planecase}. 

\subsection{The cubic invariant}\label{ssec:generalities}

Notation as in \S\ref {ssec:fund}. 

\begin{proposition}\label {prop:triplept} Let $X\subset \p^{2n+1}$ be a Bronowski variety and let $x\in X$ be a general point. Then:
\begin {itemize}
\item [(i)] $X$ presents the hyperplane case if and only if there is a hyperplane section $H_x$ of $X$ with a point of multiplicity 
at least 3 at $x$;
\item [(ii)] if $X$ presents the hyperplane case, then the hyperplane section $H_x$ is unique and has multiplicity exactly 3 at $x$.
\end{itemize} 
\end{proposition}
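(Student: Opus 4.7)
\emph{Plan of proof.} Let $\pi:\tilde X\to X$ be the blow-up of $X$ at $x$, which is smooth since $x$ is general, and let $E\cong \p^{n-1}$ be the exceptional divisor; write $H=\pi^*\cO_X(1)$ for the pulled-back hyperplane class on $\tilde X$. I treat (i) and the uniqueness part of (ii) by a cohomological computation, and then establish the multiplicity statement in (ii) through a local analysis of the birational map $\tildetau_x$ around a general point of $E$.

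For (i), we use the standard exact sequence
\[
0\longrightarrow \cO_{\tilde X}(H-3E)\longrightarrow \cO_{\tilde X}(H-2E)\longrightarrow \cO_E(2)\longrightarrow 0,
\]
obtained via $(H-2E)_{|E}\cong \cO_E(2)$. Taking $H^0$, the connecting map is the second fundamental form map: its source has dimension $n+1$ (the tangent hyperplane sections of $X$ at $x$), and the image in $H^0(E,\cO_E(2))$ is the vector subspace whose projectivisation is $\II_{X,x}$, defining $\bar\tau:E\dasharrow V$. Therefore
\[
h^0(\tilde X,\,H-3E)\;=\;n-\dim \langle V\rangle,
\]
which is $0$ if $V$ is non-degenerate in $\p^n$ and $1$ if $V$ is a hyperplane (since $V$ is a hypersurface). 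A hyperplane section of $X$ has multiplicity at least $3$ at $x$ precisely when its strict transform on $\tilde X$ lies in $|H-3E|$, so (i) follows, together with uniqueness (up to scalar) of $H_x$ in the hyperplane case.

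For the multiplicity statement in (ii), let $\ell$ be a linear form on $\p^n$ cutting out $V$ and consider $\tildetau_x^*V=\{\ell\circ\tildetau_x=0\}\in |H-2E|$. It contains $E$ because $\tildetau_x(E)\subset V$, so write $\tildetau_x^*V=mE+D$ with $D$ not containing $E$ and $m\geq 1$; then $D\in |H-(m+2)E|$ is the strict transform of a hyperplane section of $X$ with multiplicity $m+2$ at $x$, which by the uniqueness above must be $H_x$. One is thus reduced to showing $m=1$.

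Since $\tildetau_x:\tilde X\dasharrow \p^n$ is a birational map of smooth $n$-folds, the ramification divisor of any resolution $\tilde X'\to \p^n$ is contracted to a closed subset $B\subset \p^n$ of codimension at least $2$ (the standard fact that the branch locus of a birational morphism of smooth varieties equals the image of its exceptional locus, hence has codim $\geq 2$). Since $V$ is a hypersurface, a general $v\in V$ lies outside $B$, and by birationality of $\bar\tau$ its preimage in $E$ is a single reduced point $e_v$. Hence $\tildetau_x$ is \'etale at $e_v$, in particular a local isomorphism between neighbourhoods of $e_v$ and $v$; so $\tildetau_x^*V$ is a reduced divisor near $e_v$, forcing $m=1$. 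Thus $H_x$ has multiplicity exactly $3$ at $x$, completing (ii). The main delicate input is the codimension $\geq 2$ bound on the branch locus of a birational map of smooth $n$-folds; without it one must at the very least invoke the compatibility of $\tildetau_x^{-1}$ with $\bar\sigma=\bar\tau^{-1}$ afforded by Lemma \ref{lem:2}, which is what guarantees that the inverse rational map is regular at a general point of $V$.
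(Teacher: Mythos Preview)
Your proof is correct. For part (i) and the uniqueness in (ii), your exact--sequence computation on $\tilde X$ is a clean reformulation of the paper's argument, which phrases the same count through the linear system $\cX''$ on $\p^n$: the movable part of $\cX''$ is the system of all hyperplanes of $\p^n$, and a tangent hyperplane section with a triple point exists precisely when one of these hyperplanes contains $V$, i.e.\ when $V$ is itself a hyperplane.

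For the multiplicity--exactly--$3$ statement in (ii), however, your approach is genuinely different from the paper's. The paper argues by deformation: as the general point $x$ moves over $X$, the hyperplanes $H_x$ trace out a positive--dimensional subvariety $\mathcal T\subset X^*$, and any $H\in T_{\mathcal T,H_x}$ distinct from $H_x$ has multiplicity at least $m-1$ at $x$ (first--order deformation theory, cf.\ \cite[\S 2]{WDV}); uniqueness then forces $m-1\le 2$. You instead stay at the fixed point $x$ and exploit the birational geometry of $\tilde\tau_x$: since the target $\p^n$ is normal, any proper birational resolution of $\tilde\tau_x$ is an isomorphism over the complement of a codimension~$\ge 2$ subset, so $\tilde\tau_x$ is a local isomorphism at a general point of $E$ and $\tilde\tau_x^*V$ is reduced there. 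Your route is more internal to the machinery of \S\ref{ssec:fund} and avoids moving $x$; the paper's argument is shorter and makes elegant use of the genericity of $x$. One small remark: it is normality of the \emph{target} $\p^n$, rather than global smoothness of $\tilde X$, that yields the codimension--$2$ bound, so your argument goes through even though $\tilde X$ is only guaranteed to be smooth along $E$.
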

\begin{proof} The tangent hyperplane sections of $X$ at $x$ are mapped by $\tau$ to the hypersurfaces of the linear system $\cX''$, whose movable part is the linear system of all hyperplanes of $\p^n$. There is a tangent hyperplane section
with a point of multiplicity 3 or more if and only if the fundamental hypersurface is a hyperplane. This proves (i) and the uniqueness assertion in (ii).

Let $m\ge 3$ be the multiplicity of $H_x$ at $x$. Consider the positive dimensional 
subvariety $\mathcal T$ of $X^*$ whose general point is $H_x$. 
Take a point $H\in T_{\mathcal T,H_x}$ different from $H_x$. It is
a hyperplane section $H$ of $X$ with a point of multiplicity at 
least $m-1$ at $x$ (see \cite {WDV}, \S 2).  By the uniqueness of $H_x$, $m=3$ follows.
\end{proof}

We denote by $\tilde H_x$ the proper transform of $H_x$ on $\tilde X$. Its intersection with the
exceptional divisor $E$, is a cubic hypersurface $C_x$ corresponding to  
the tangent cone to $H_x$ at $x$. To ease notation, we may drop the index $x$ if there 
is no danger of confusion.
We call $C$ the \emph{cubic invariant} of $X$.

\begin{corollary}\label{cor:cubicinv} Same hypotheses as in Proposition \ref {prop:triplept}. If $H_x$
has an isolated triple point at $x$ (in particular if the cubic invariant is not a cone), then we are in case $(H2)$ and the quadratic Cremona transformation
$\bar \tau: E\map V$ is \emph{homaloidal} {\rm(}see \cite {crs}\rm{)}. 
\end{corollary}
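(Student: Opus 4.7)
The plan is to establish the two assertions of the corollary separately: (a) $X$ presents case $(H2)$, and (b) the quadratic Cremona $\bar\tau$ is, up to a linear automorphism of $V$, the polar map of the cubic invariant $C$, so that $C$ is a homaloidal cubic in the sense of \cite{crs}.

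For (a) I argue by contrapositive. Suppose we are in case $(H1)$. By Remark \ref{rem:planecase} (invoking \cite[(3.21)]{GH} for $n\ge 3$, with the case $n=2$ completed in \S\ref{ssec:H1}), $X$ is a scroll over a curve. Choose local parameters $s$ on the base curve and $\mathbf u=(u_1,\dots,u_{n-1})$ along the ruling through $x$, set $\mathbf t=(\mathbf u,s)$, and pick Monge coordinates so that $T_{X,x}=\{\mathbf y=0\}$. Because the parametrization of $X$ is affine-linear in $\mathbf u$ at fixed $s$, its Taylor expansion in $\mathbf t$ contains no pure-$\mathbf u$ monomials; hence $Q_j=s\cdot L_j(\mathbf t)$ and, crucially, $R_j=s^2\cdot M_j(\mathbf t)$ for linear forms $L_j,M_j$. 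In particular $C=R_0=s^2\ell$ is a cone whose vertex $\{s=0\}\cap\{\ell=0\}$ has projective dimension $n-3\ge 0$. The standard criterion that a homogeneous form of degree $m$ has the origin as an isolated mult-$m$ point if and only if its projectivization has no positive-dimensional vertex then contradicts the isolated triple-point hypothesis, ruling out $(H1)$.

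For (b) I normalize via Proposition \ref{prop:triplept}(ii) so that $V=\{y_0=0\}$, forcing $Q_0\equiv 0$ and identifying $C=R_0$. The calculation of $\tildetau_{\vert E}$ from \S\ref{ssec:fund} gives $\bar\tau([\mathbf v])=[0:Q_1(\mathbf v):\cdots:Q_n(\mathbf v)]\in V$, so $\bar\tau$ is defined by the $n$-dimensional linear system $\langle Q_1,\dots,Q_n\rangle$ of quadrics on $E\cong\PP^{n-1}$. The substantive step is to show
\[
\langle Q_1,\dots,Q_n\rangle=\langle\partial_1R_0,\dots,\partial_nR_0\rangle
\]
as linear systems of quadrics on $E$. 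Granted this, $\bar\tau$ coincides up to a linear automorphism of $V$ with the polar map of $C$, and since case $(H2)$ makes this polar map Cremona, $C$ is a homaloidal cubic --- the content of the corollary.

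The main obstacle is the identification above, which is an integrability property of the third-order jet of $X$ at $x$: the $n$ quadrics of $\II_x$ transverse to $V$ must arise as polars of a single cubic potential. I would attack it via the inverse birational map $\sigma=\tau^{-1}\colon\PP^n\map X$. Under the hypothesis, Lemma \ref{lem:3} should give $d=3$ (the isolated triple-point condition precluding fixed parts in $\cX''$), so $\sigma$ is defined by the $(2n+1)$-dimensional linear system of cubics in $\PP^n$. Writing $\sigma=[P_0:\cdots:P_n:y_0Q:\cdots:y_nQ]$ with $P_i$ cubics and $Q$ a quadric on $\PP^n$, and expanding the identity $\tau\circ\sigma=\mathrm{id}_{\PP^n}$ along a general path approaching $V=\{y_0=0\}$, matching leading orders should force $Q|_V$ to be a multiple of $R_0\circ\bar\sigma$ and the $P_i|_V$ to equal $\partial_iR_0$ evaluated along $\bar\sigma$ up to scalars; polarizing these identities and using $\bar\tau\circ\bar\sigma=\mathrm{id}_V$ then yields the required equality of linear systems.
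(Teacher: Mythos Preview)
Your approach to (b) has a real gap. You want Lemma \ref{lem:3} to give $d=3$, arguing that ``the isolated triple-point condition preclud[es] fixed parts in $\cX''$''. But by Lemma \ref{lem:3} the fixed part of $\cX''$ records the indeterminacy of $\tilde\tau$ \emph{away} from $x$, i.e.\ points of $X\cap T_{X,x}$ other than $x$; the isolated triple-point hypothesis is purely local at $x$ and says nothing about such points. In fact, proving $d=3$ in case $(H2)$ is exactly the content of Theorem \ref{th:hyperplanar}, established later and under the additional hypotheses of normality and geometric linear normality --- so you cannot invoke it here. Without $d=3$ the rest of your sketch has no foothold, and it is in any case too vague to constitute a proof. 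There is also a smaller gap in (a): the criterion you cite applies to the homogeneous cubic $\{R_0=0\}$, not to $H_x=\{R_0+(\text{higher order})=0\}$; higher-order terms can make a triple point isolated even when the tangent cone $R_0$ is a cone (e.g.\ $R_0=s^2u_1$ corrected by $u_2^4$). The scroll structure does give more --- since the local equation is affine-linear in $\mathbf u$ one has $y_0=s^2 g$ with $g$ affine-linear in $\mathbf u$, so the triple locus contains the positive-dimensional $\{s=0,\ g(\cdot,0)=0\}$ for $n\ge 3$ --- but you should argue this directly rather than through the tangent cone.

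The paper's proof of (b) is entirely different and avoids $\sigma$ altogether. One considers the family $\mathcal T\subset X^*$ traced out by $H_x$ as $x$ ranges over $X$. The isolated triple-point hypothesis forces $\dim\mathcal T=n$: otherwise $H_x=H_y$ for $y$ in a positive-dimensional family through $x$, and $H_x$ would be triple along that family. For general $x$ the point $H_x$ is then smooth on $\mathcal T$, so $T_{\mathcal T,H_x}$ has projective dimension $n$; and a first-order deformation of $H_x$ within $\mathcal T$ moving the triple point in the direction $v$ is, to first order, the directional derivative $\partial_v$ of the local equation of $H_x$ (this is the deformation-theoretic input from \cite[\S 2]{WDV}), hence a hyperplane section with a double point at $x$ and tangent cone $\partial_v C$. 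Thus $T_{\mathcal T,H_x}$ sits inside the $\p^n$ of tangent hyperplanes at $x$ and, by the dimension count, equals it. Consequently the quadrics of $\II_{X,x}$ are exactly the first polars of $C$, so $\bar\tau$ is the polar map of the cubic invariant --- which, being birational by $(H2)$, makes $C$ homaloidal.
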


\begin{proof} In case $(H1)$, $X$ is a scroll over a curve, and $H_x$ is triple along the whole ruling through $x$. So we are in case $(H2)$.  Then the variety $\mathcal T$ in the proof of Proposition \ref {prop:triplept}
has dimension $n$ and $T_{\mathcal T,H_x}$ is spanned by $H_x$ and by independent hyperplane
 sections $H_i$, $1\le i\le n$, with a double point at $x$,
 which span the linear system of all hyperplane sections tangent to $X$ at $x$. Locally around $x$, they are given by the first order derivatives of the equation of $H_x$ (see again \cite[ \S 2]  {WDV}). This implies that the quadrics in ${\rm II}_{X,x}$ are spanned by the derivatives of the polynomial defining $C$. \end{proof}

\begin{remark}\label{rem:pres} The same argument of the proof of Corollary \ref {cor:cubicinv}
shows that, in any case, the first polars of the cubic invariant belong to the second fundamental form,
see also \cite[Theorem 5.2]{PR} and \S \ref{parametric} below.
\end{remark}

\subsection{The $(H1)$ case} \label{ssec:H1}

\begin{lemma}\label{lemma:scrolls} Let $X\subset \PP^{r}$ be an irreducible,
non--degenerate, normal variety of dimension $n\ge 2$, which is a scroll over a curve. Then either $X$
is smooth or it is a cone.
\end{lemma}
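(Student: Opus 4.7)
The plan is to reduce the statement to a dichotomy based on whether a natural birational model of $X$ maps finitely or not. The key setup is the abstract scroll: since $X\subset\PP^r$ is covered by a $1$-parameter family of $(n-1)$-planes, I would normalize the parameter curve to a smooth curve $\tilde C$ and, from the universal family of rulings, build a $\PP^{n-1}$-bundle $\varrho\colon Y=\PP(\mathcal E)\to \tilde C$ together with a morphism $\phi\colon Y\to X$. By construction $\phi$ restricts on each fiber of $\varrho$ to an isomorphism onto the corresponding linear ruling of $X$, and $\phi$ is birational because a general point of $X$ lies on a unique ruling.

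With $\phi$ in hand, the dichotomy is clean. If $\phi$ is finite, then it is a finite birational morphism onto the normal variety $X$, so Zariski's Main Theorem forces $\phi$ to be an isomorphism; since $Y$ is smooth, so is $X$. If $\phi$ is not finite, there exists $p\in X$ such that $\phi^{-1}(p)$ has an irreducible component $F_0$ with $\dim F_0\ge 1$. As $\phi$ is an isomorphism on each fiber $Y_c$ of $\varrho$, the component $F_0$ cannot sit inside a single $Y_c$, so the restriction $\varrho\vert_{F_0}\colon F_0\to\tilde C$ is surjective. For every $c\in\tilde C$ one then finds a point of $F_0\cap Y_c$ mapped to $p$, i.e.\ the ruling $\phi(Y_c)$ passes through $p$. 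Since $X$ is the union of its rulings, $X$ is a cone with vertex $p$.

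The main technical point I expect to be delicate is the construction of the birational morphism $\phi\colon Y\to X$ in the presence of singularities of $X$: one must argue that the family of rulings, after normalizing the parameter curve, defines an honest morphism (not merely a rational map) from the abstract projective bundle to $X$, so that the appeal to Zariski's Main Theorem in the normal case is legitimate. Once $\phi$ is in place, the argument runs essentially on autopilot: normality together with Zariski's Main Theorem disposes of the smooth case, while the fiber/ruling incidence described above yields the cone case.
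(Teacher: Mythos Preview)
Your argument is correct and different from the paper's. The paper proceeds by reduction to the surface case: assuming $x\in X$ is singular, it cuts with general hyperplanes through $x$ until one lands on a normal surface scroll singular at $x$, and then cites \cite[Claim~4.4]{CLM} to conclude this surface is a cone with vertex $x$; climbing back up gives that $X$ itself is a cone at $x$. Your route is a direct, dimension--free argument via the abstract projective bundle $Y=\PP(\mathcal E)\to \tilde C$ and the birational morphism $\phi\colon Y\to X$: Zariski's Main Theorem handles the finite case, and the positive--dimensional fiber forces every ruling through a common point in the non--finite case. What the paper's approach buys is brevity, at the cost of an external reference and the (implicit) verification that the general hyperplane section through $x$ stays normal, stays a scroll, and stays singular at $x$. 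What your approach buys is self--containment and a cleaner structural picture; the only point needing care, which you correctly flag, is that after normalizing the parameter curve the map $\tilde C\to\G(n-1,r)$ extends to a morphism (automatic since $\tilde C$ is a smooth curve), so that the pullback of the tautological bundle produces an honest morphism $\phi$ restricting to a linear isomorphism on every fiber of $\varrho$, including the special ones.
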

\begin{proof} Let $x\in X$ be a singular point. We claim that
$X$ is a cone with vertex at $x$. By taking hyperplane sections through $x$,
it suffices to prove the assertion if $X$ is a surface, which follows from \cite [Claim 4.4] {CLM}. 
\end{proof} 

Using this, by taking into account  Theorem \ref {thm:classif}, Remark \ref {rem:planecase},  \cite [Example 2.1] {OADP} and by remarking that  for surfaces presenting the hyperplane case  only $(H1)$ occurs, we deduce the following result.

\begin{proposition}\label{prop:scrolls} Let $X\subset \PP^{2n+1}$ be a normal Bronowski variety
of dimension $n$. 

\begin{itemize}

\item[(i)] If $X$ is a scroll over a curve, then it is a smooth rational normal scroll.

\item[(ii)] $(H1)$ holds for $X$
if and only if $X$ is a smooth rational normal scroll.
\item[(iii)] If $n=2$ then $X$ presents the hyperplane case if and only if it
is a smooth rational normal scroll.
\end{itemize}
\end{proposition}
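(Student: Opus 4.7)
The strategy is to prove (i) first, then derive (ii) from (i) combined with \cite[(3.21)]{GH} for $n\ge 3$, and finally handle (iii) separately for $n=2$, which at the same time supplies the missing case of (ii).

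For (i), by Lemma \ref{lemma:scrolls} the normal scroll $X$ is either smooth or a cone. If $X$ were a cone with vertex $v$, every embedded tangent space at a smooth point would contain $v$, and Terracini's lemma would force $X$ to be secant defective; but Bronowski varieties are not secant defective, so $X$ is smooth. Since $X$ is rational (the tangential projection is birational onto $\p^n$), the base of the scroll must be $\p^1$ and $X$ is abstractly of the form $S(a_1,\ldots,a_n)$. To see that the embedding in $\p^{2n+1}$ is the linearly normal one, I would argue that if $X$ were obtained from a rational normal scroll $S(a_1,\ldots,a_n)\subset\p^{\sum a_i+n-1}$ with $\sum a_i>n+2$ by a nontrivial linear projection, then the tangential projection of $X$ would factor through that of $S(a_1,\ldots,a_n)$ and would fail to be birational onto $\p^n$. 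Consequently $\sum a_i=n+2$ and $X$ is a rational normal scroll.

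For (ii), the implication $(\Leftarrow)$ follows from \cite[Example 2.1]{OADP}: for a smooth rational normal scroll $X$, the ruling $R\cong\p^{n-1}$ through a general point $x$ lies inside $T_{X,x}$, so $\II_{X,x}$ vanishes on the hyperplane of $E=\p(T_{X,x})$ corresponding to the tangent directions to $R$, which is precisely case $(H1)$. For $(\Rightarrow)$ with $n\ge 3$, \cite[(3.21)]{GH} states that the presence of a base hyperplane of $\II_{X,x}$ at the general point forces $X$ to be a scroll over a curve, and then (i) applies. The case $n=2$ of $(\Rightarrow)$ is settled in (iii).

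For (iii), observe first that when $n=2$ we have $E\cong\p^1$ and $V$ is a line, so $\bar\tau\colon\p^1\map\p^1$ is a birational map, hence a projective isomorphism; this rules out case $(H2)$ and shows that the hyperplane case coincides with $(H1)$. It remains to prove that a normal Bronowski surface $X\subset\p^5$ presenting $(H1)$ is a smooth rational normal scroll. The base point $p_x$ of the pencil $|\II_{X,x}|$ provides an asymptotic tangent direction $v_x$ at every general $x\in X$, defining a rational rank-one distribution on $X$ that integrates to a foliation with one-dimensional leaves. The task is to show that the leaves are lines in $\p^5$; once this is proved, $X$ is a scroll over a curve and (i) yields the conclusion. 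To establish linearity of the leaves, I would invoke Remark \ref{rem:pres}, which places the first polars of the cubic invariant $C_x$ inside $\II_{X,x}$; since $|\II_{X,x}|$ has the fixed base point $p_x$, the cubic $C_x$ on $E\cong\p^1$ must be singular at $p_x$, and the uniqueness of $H_x$ from Proposition \ref{prop:triplept} should force the leaf through $x$ to lie set-theoretically in the $v_x$-branch of $H_x$; a degree count on this distinguished hyperplane section then identifies the leaf with a line.

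The most delicate step is the final one in (iii): verifying that the asymptotic leaves are genuinely linear rather than merely osculating $X$ to second order. For $n\ge 3$ this is exactly what \cite[(3.21)]{GH} provides, but for $n=2$ every rank-one distribution is automatically integrable and one cannot appeal to a higher-rank Frobenius-type argument; one must instead exploit the cubic-invariant/triple-point structure, or alternatively match $X$ against one of the known classes of Bronowski surfaces via an argument modelled on the proof of Theorem \ref{thm:classif}.
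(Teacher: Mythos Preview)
Your overall strategy matches the paper's: prove (i), derive (ii) from (i) together with \cite[(3.21)]{GH} for $n\ge 3$ (with the $n=2$ case supplied by (iii)), and treat (iii) separately. Part (ii) and the observation that for $n=2$ only $(H1)$ can occur are handled essentially as in the paper.

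The substantive divergence is in (iii). You attempt a direct differential-geometric argument: integrate the asymptotic direction coming from the base point of $|\II_{X,x}|$ to a foliation, and then show the leaves are lines. As you yourself concede, the last step is incomplete, and the cubic-invariant considerations you sketch do not obviously close the gap. The paper takes a much shorter route and simply invokes Theorem~\ref{thm:classif}. The relevant point is that the isolated-singularity branch of that proof uses only the birationality of the general tangential projection, not the full OADP hypothesis, and therefore applies verbatim to any normal Bronowski surface. Thus $X$ is either a smooth rational normal scroll or a (weak) del Pezzo quintic; for the latter, $|\II_{X,x}|$ at a general point is the full system of quadrics on $E\cong\p^1$, so the fundamental hypersurface $V\subset\p^2$ is a conic, not a line, and the hyperplane case does not occur. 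This finishes (iii) without any foliation argument. Your own closing remark (``match $X$ against one of the known classes \ldots\ via an argument modelled on the proof of Theorem~\ref{thm:classif}'') is precisely what the paper does; you should promote it from afterthought to main argument and drop the asymptotic-leaf approach.

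On (i), your linear-normality step (``the tangential projection of $X$ would factor through that of $S(a_1,\ldots,a_n)$ and would fail to be birational'') is asserted rather than proved. The paper is equally terse here: it obtains smoothness from Lemma~\ref{lemma:scrolls} together with the fact that cones are secant defective, and then refers to \cite[Example~2.1]{OADP} for the identification with rational normal scrolls.
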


\subsection{The $(H2)$ case}\label {ssec:Q1case}
In \cite{PR} one studies irreducible, non--degenerate projective varieties $X\subset\p^{2n+1}$ of dimension $n$
which are \emph{3--covered} by twisted cubics, i.e. given three general points of $X$ there is a twisted cubic
contained in $X$ passing through them. It has been proved there that any 
such variety is a Bronowski, and actually a OADP variety {presenting} the hyperplane case. It was also proved there that for these varieties $(H1)$ holds if and only if they are rational normal scrolls.
Next we somehow complete the picture by proving that the class of normal, geometrically linearly normal, Bronowski varieties of dimension $n$ 
presenting  case $(H2)$ coincides with the one of {normal}
irreducible  varieties $X\subset\p^{2n+1}$ which are 3--covered by twisted cubics, and are different from rational normal scrolls. In particular Bronowski's conjecture \ref {conj:bron} holds for {the previous class of} Bronowski varieties
presenting  case $(H2)$

\begin {proposition}\label{prop:22} 
Let $X\subset \PP^{2n+1}$ be a Bronowski variety presenting case $(H2)$.
Then $\bar\tau:E\map V$ is a quadro--quadric Cremona transformation and the cubic invariant is the ramification scheme of $\bar \tau$.
\end{proposition}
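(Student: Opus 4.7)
The plan is to exploit the two divisors $E$ and $\tilde H_x$ on $\tilde X$, both of which will be shown to map onto the hyperplane $V\subset \PP^n$ under $\tilde\tau$. Since $H_x$ has an isolated triple point at $x$ (Proposition~\ref{prop:triplept}), the strict transform $\tilde H_x$ has class $\pi^*H-3E$; equivalently $\pi^*H_x=\tilde H_x+3E$, so that $\tilde H_x+E=\pi^*H_x-2E$ is a member of the $n$-dimensional sublinear system of $|\pi^*H-2E|$ defining $\tilde\tau$ (the one corresponding to the tangent hyperplane section $H_x$). Its image under $\tilde\tau$ is therefore contained in a single hyperplane $\Pi\subset \PP^n$; since $\tilde\tau(E)=V$ already equals a hyperplane by $(H2)$, we conclude $\Pi=V$ and consequently $\tilde\tau(\tilde H_x)\subset V$.

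I would then prove that $\tilde\tau_{\vert \tilde H_x}\colon\tilde H_x\dasharrow V$ is birational. The unique divisor of $X$ whose $\tau$-image lies in $V$ is $H_x$, since $V$ is the $\tau$-image of the unique hyperplane $\Pi_{H_x}\subset \PP^{2n+1}$ cutting $H_x$ on $X$. Hence for general $w\in V$ the unique fiber $\tilde\tau^{-1}(w)$ away from $E$ consists of a single point lying on $\tilde H_x$, and $\tilde\tau_{\vert\tilde H_x}$ is birational by the dimension count. Thus over a general $v\in V$ the fiber $\tilde\tau^{-1}(v)$ consists of two distinct points, one on $E$ (namely $\bar\sigma(v)$) and one on $\tilde H_x$; these two coincide precisely when $v$ belongs to $\bar\tau(C_x)$, with $C_x=E\cap\tilde H_x$.

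To establish that $\bar\tau$ is quadro--quadric, I would invoke Lemma~\ref{lem:2}, which writes $\bar\sigma=\phi_{\mathcal X'}\vert_V$ with $\mathcal X'$ of degree $d-1$ (and $d=\deg \mathcal X\geq 3$ by Lemma~\ref{lem:3}). The dual-sheet description above forces a fixed component of degree $d-3$ in the restriction $\mathcal X'\vert_V$, coming from the extraneous sheet $\tilde H_x$, so that the movable part defining $\bar\sigma$ is a linear system of quadrics. This gives $\bar\sigma$ degree $2$ (the alternative, degree $1$, would force $\bar\tau$ itself to be linear, contradicting $(H2)$), and hence $\bar\tau$ is quadro--quadric. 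For the identification of the cubic invariant with the ramification scheme, by \S~\ref{ssec:cremona} the ramification scheme $C_1\subset E$ of $\bar\tau$ is a cubic hypersurface, characterized by $\bar\tau(C_1)\subset B_2$, the base scheme of $\bar\sigma$. The dual-sheet description identifies $B_2$ with $\bar\tau(C_x)$, whence the scheme-theoretic inclusion $C_x\subset C_1$; since both are cubic hypersurfaces in $E\cong \PP^{n-1}$, they must be equal.

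The main obstacle is the quantitative identification of the fixed part of $\mathcal X'\vert_V$ as having degree exactly $d-3$, which is the step pinning down the movable part to quadrics rather than to forms of higher degree. This requires tracing how the base-locus contribution of the second sheet $\tilde H_x$ propagates to the side of $\mathcal X'$, and it rests crucially on the uniqueness of the triple-point hyperplane section $H_x$ furnished by Proposition~\ref{prop:triplept}.
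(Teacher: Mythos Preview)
Your approach is substantially more elaborate than the paper's, and the gap you flag in step~5 is real and not incidental. The paper's argument is a four--line degree count that avoids the linear system $\cX'\vert_V$ entirely: since $\bar\tau$ is given by quadrics, it is a Cremona transformation of type $(2,k)$ for some $k\ge 2$, and by the general theory in \S\ref{ssec:cremona} its ramification scheme in $E$ has degree $2k-1$. The single geometric input is that this ramification scheme is contained in the cubic invariant $C$. Indeed, the element of $\cX''$ given by the hyperplane $V$ plus the fixed part corresponds to $\tilde H=\tilde H_x$ on $\tilde X$, which cuts $E$ in $C$; since $V$ is contracted by $\sigma$ to the point $x$, the divisor $\tilde H$ is the image of the indeterminacy locus of $\sigma$, so its trace $C$ on $E$ contains the ramification scheme of $\bar\tau$. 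Hence $2k-1\le 3$, forcing $k=2$, and then ramification scheme and $C$, both cubics, must coincide.

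Your route, computing $\deg\bar\sigma$ via the fixed part of $\cX'\vert_V$, essentially anticipates the analysis carried out later in the proof of Theorem~\ref{th:hyperplanar} (Claim~\ref{cl:1}), where showing that the fixed part $F'$ has degree $d-3$ requires the geometric linear normality hypothesis and a careful argument on a resolution. But Proposition~\ref{prop:22} is stated for an arbitrary Bronowski variety in case $(H2)$, with no normality or linear normality assumption; so an argument through $\cX'\vert_V$ would need hypotheses the statement does not grant. The two--sheet picture you describe over $V$ is correct and suggestive, but it does not by itself pin down the degree of the fixed component: the $d-3$ you invoke is the degree of the fixed part of $\cX''$ (Lemma~\ref{lem:3}), not a priori of $\cX'\vert_V$. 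The degree bound $2k-1\le 3$ bypasses all of this.
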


\begin{proof} Let $k$ be the degree of the inverse $\bar \sigma$ of $\bar \tau$.
The element of the linear system $\cX''$ given by the hyperplane $V$
plus the fixed part (if any), corresponds to the divisor $\tilde H$
on $\tilde X$ which cuts out the cubic invariant $C$ on $E$. Since $V$ is
contracted by $\sigma$ to a point, $\tilde H$ is the image of the
indeterminacy locus of $\sigma$. Therefore the ramification scheme
of $\bar \tau$, of degree $2k-1$, is contained in $C$. 
Thus $2k-1\le 3$, hence $k=2$ and the assertion follows.
\end{proof}

Let $Z\subset E$ be the indeterminacy locus scheme of $\bar\tau$ and let $Z'\subset V$ be the same for $\bar\sigma$. As we saw in \S \ref {ssec:cremona}, we can consider the cubic ramification schemes $C\subset E$ (i.e. the cubic invariant by Proposition \ref {prop:22}) and $C'\subset V$. The hypersurface  $C$ [resp. $C'$]
has double points along $Z$ [resp. along $Z'$]. Moreover, $C=\Sec_b(Z)$ [resp. $C'=\Sec_b(Z')$] is the unique cubic hypersurface in $E$ [resp. in $V$] with double points along $Z$ [resp. along $Z'$]. In particular,  
the linear system of cubic hypersurfaces in $\p^n$ having double points along $Z'$ has dimension  $2n+1$.  

We may assume that $V\subset\p^n$ has equation $x_0=0$.
By the analysis in \S \ref {ssec:fund}, the vector space of forms of degree $d$ corresponding to 
the linear system $\cX$ can be generated by a polynomial $g$ not divisible by $x_0$ and  by $2n+1$ polynomials $g_1,\ldots, g_{2n+1}$, each divisible by $x_0$ and $n+1$ of them actually divisible by $x_0^2$. The latter polynomials span a vector space corresponding to the $n$--dimensional linear system of hyperplane section of $X$ tangent to $X$ at $x$.

 \begin{theorem}\label{th:hyperplanar} Let $X\subset \PP^{2n+1}$ be a normal, geometrically linearly normal, 
 Bronowski variety presenting case $(H2)$. Then the inverse $\sigma:\p^n\map X\subset\p^{2n+1}$ of the general tangential projection of $X$ is defined
 by the relatively complete linear system of cubic hypersurfaces  having double points along $Z'$.
 Equivalently $X\subset\p^{2n+1}$ is  a  variety 3-covered by twisted cubics and therefore it is a OADP variety.
 \end{theorem}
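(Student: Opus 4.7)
My plan is to identify $\cX$ with the linear system $\mathcal M$ of cubic hypersurfaces of $\PP^n$ having double points along $Z'$. This proceeds in three stages.

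\emph{First}, I compute $\dim\mathcal M=2n+1$. Writing $F\in\mathcal M$ as $F=F_0+x_0F_1+x_0^2F_2+x_0^3F_3$, with $F_k$ a form of degree $3-k$ in $x_1,\dots,x_n$, the double-point condition along $Z'\subset V=\{x_0=0\}$ translates into $F_0\in\langle C'\rangle$ and $F_1\in\langle Q_1,\dots,Q_n\rangle$, where $C'=\Sec_b(Z')$ is the unique cubic on $V$ with double points along $Z'$ (from \S\ref{ssec:cremona} applied to the quadro-quadric $\bar\sigma$) and $Q_1,\dots,Q_n$ are the defining quadrics of $\bar\sigma$. With $F_2$ ($n$ parameters) and $F_3$ ($1$ parameter) unconstrained, the total is $1+n+n+1=2n+2$, giving $\dim\mathcal M=2n+1$.

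\emph{Second}, I prove $\cX\subseteq\mathcal M$. Using the notation of the paragraph preceding the statement, $\cX$ is generated by $g$ (not divisible by $x_0$), $n$ elements $g_i=x_0h_i$ with $x_0\nmid h_i$, and $n+1$ tangent elements $g_j=x_0^2k_j$. To establish $d=3$: by Lemma~\ref{lem:3} we already have $d\geq 3$, and by Lemma~\ref{lem:2} combined with Proposition~\ref{prop:22}, the system $\langle h_1\vert_V,\dots,h_n\vert_V\rangle$ defines the quadro-quadric Cremona $\bar\sigma$, whose movable part has degree $2$. A positive-degree fixed factor on $V$ would, via Lemma~\ref{lem:3}, correspond to a fixed part $F_0$ of $\cX''$ of degree $d-3$, so all $n+1$ tangent generators $g_j$ would share the common factor $x_0^2F_0$; this contradicts the relative completeness of $\cX$ guaranteed by geometric linear normality of $X$, because one could strictly enlarge the system in the same linear class with the same base conditions by absorbing $F_0$. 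Hence $d=3$. With $d=3$, a general element $F=a_gg+\sum a_ig_i+\sum a_jg_j$ of $\cX$ satisfies $F\vert_V=a_g g\vert_V$ and $\partial F/\partial x_0\vert_V=a_g\partial g/\partial x_0\vert_V+\sum a_iQ_i$. Since the $Q_i$ vanish along $Z'$, the double-point condition reduces to (A) $g\vert_V\in\langle C'\rangle$ and (B) $\partial g/\partial x_0\vert_V\in\langle Q_1,\dots,Q_n\rangle$ after adjusting $g$ modulo the $g_i$. For (A), the cubic $g\vert_V$ cuts out the base locus of $\cX$ on $V$ (the generators $g_i,g_j$ all vanish there); since $\sigma$ has honest indeterminacy at $Z'$ inherited from the indeterminacy of $\bar\sigma$, $g\vert_V$ vanishes on $Z'$, and the quadro-quadric nature of $\bar\sigma$ forces vanishing to order two there, so $g\vert_V$ is a multiple of $C'$ by uniqueness. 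For (B), I invoke the analogue for $\bar\sigma$ of Remark~\ref{rem:pres}: the first polars of the ramification cubic $C'$ of the quadro-quadric Cremona $\bar\sigma$ lie in the span of its defining quadrics $\langle Q_1,\dots,Q_n\rangle$.

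\emph{Third}, since $\cX\subseteq\mathcal M$ and $\dim\cX=2n+1=\dim\mathcal M$, equality $\cX=\mathcal M$ follows from the relative completeness of $\cX$ ensured by geometric linear normality. The final assertion that $X\subset\p^{2n+1}$ is $3$-covered by rational normal cubics, and hence OADP, then follows by applying the results of \cite{PR} to the system $\mathcal M$, thereby establishing Bronowski's conjecture (Conjecture~\ref{conj:bron}) in case $(H2)$.

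The main technical obstacle is the proof that $d=3$; ruling out the fixed part $F_0$ of $\cX''$ is where the hypothesis of geometric linear normality of $X$ enters in an essential way, since without it one could produce apparent counterexamples by pulling back $\sigma$ along an intermediate linear projection.
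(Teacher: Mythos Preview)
Your overall architecture is reasonable and the dimension count in Step~1 is correct, but Step~2 contains genuine gaps at precisely the points where the paper has to work hardest.

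\textbf{The argument for $d=3$ does not go through.} You observe that if $d>3$ then $\cX''$ has a fixed part $F_0$ of degree $d-3$, so the tangent generators $g_j$ share the factor $x_0^2F_0$. But this does not contradict relative completeness of $\cX$: the remaining generators $g$ and $g_1,\dots,g_n$ need not be divisible by $F_0$, so $\cX$ has no global fixed component, and there is no evident way to ``absorb $F_0$'' and produce new degree-$d$ forms with the same base scheme. Lemma~\ref{lem:3} tells you that $F_0$ is the $\tau$-image of exceptional loci of $\tildetau$ away from $x$; nothing in geometric linear normality prohibits such loci from existing. The paper does not separate the proof of $d=3$ from the identification of $g\vert_V$: it shows first (Claim~1, via a normal-bundle computation on the blow-up and Euler's relation) that $\bar g=g\vert_V$ factors as $f'\cdot C'$ with $\deg f'=d-3$, and only then (Claim~2) forces $f'=0$ by arguing that the divisor $\{\bar g=0\}$ is contracted by the resolved map $\tilde\sigma\vert_W$, hence lies in the ramification locus, which is $\bar C'$ plus exceptional divisors.

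\textbf{Even granting $d=3$, your verification of $\cX\subset\mathcal M$ is incomplete.} For (A) you assert that ``$\sigma$ has honest indeterminacy at $Z'$ inherited from $\bar\sigma$'', but $\sigma$ and $\bar\sigma$ are different maps: $\sigma$ contracts $V$ to the point $x$, whereas $\bar\sigma=\phi_{\cX'}\vert_V$ is the internal projection. On $V$ every generator of $\cX$ except $g$ vanishes identically, so the indeterminacy locus of $\sigma$ on $V$ is exactly $\{g\vert_V=0\}$; you have not shown this contains $Z'$, let alone with multiplicity two. The paper's contraction argument (Claim~2) is what supplies this: since $\{g\vert_V=0\}$ maps under $\tilde\sigma\vert_W$ to something that, if positive-dimensional in $E$, would force the general hyperplane section of $X$ through $x$, it must be contracted, hence contained in the ramification cubic $C'$. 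For (B) you invoke an analogue of Remark~\ref{rem:pres}, but that remark concerns the partials $\partial C'/\partial x_i$ for $i\ge1$; the quantity $\partial g/\partial x_0\vert_V$ is the $x_0$-linear coefficient of $g$, an a priori unrelated quadric, and no adjustment modulo the $g_i$ helps unless it already lies in $\langle Q_1,\dots,Q_n\rangle$.

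In short, the heart of the matter is to prove that $g\vert_V$ is (a fixed part times) $C'$ and that the fixed part is empty; the paper achieves both simultaneously through the blow-up analysis and the contraction argument, and your proposal does not supply a substitute for either.
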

 
 \begin{proof} Let  $d\geq 3$ be  the degree
 of the linear system of hypersurfaces defining $\sigma$.
By Lemma \ref {lem:2}, the map $\phi_{\cX'}$ restricted to $V$ coincides with $\bar\sigma:V\map E$.

Consider the  codimension 2 complete intersection subscheme $B$ of degree $d\geq 3$ in $\p^n$ 
with equations $x_0=g=0$,
contained in the indeterminacy locus scheme of $\sigma$. This 
can be considered as a non--zero divisor $F$ of $V$ defined by the equation $\bar g=0$, where $\bar g$ is $g$ modulo $x_0$. 
Since $\cX'_{\vert V}$ induces the quadro--quadric birational map $\bar \sigma$, the fixed component of this linear system is
a divisor $F'\subset V$ of degree $d-3$. 

\begin{claim}\label {cl:1} 
 One has $F=F'+C'$ as subschemes of $V$. 
 \end{claim}
 
 \begin{proof}[Proof of the Claim] 
 Let  $Y_1=\Bl_B(\p^n)$ with exceptional divisor $B_1$, and let $W_1$ be the proper transform of $V$ on $Y_1$. Then  $W_1\cong V$ and $Z'$ can be considered as subscheme of $W_1$. Let
$Y_2=\Bl_{Z'}(Y_1)$ and take a minimal desingularisation $Y$ of $Y_2$ (this is considered by technical reasons and, as a first approximation, the reader may pretend that $Y=Y_2$). Let 
$D=\sum_{i=1}^h D_i$ ($D_i$  irreducible) be the exceptional divisor of $Y\to Y_2$
and let $W$ and $\bar B_1$ be the proper transforms of $W_1$ and $B_1$
on $Y$. Let $\tilde \sigma:Y\map X$ be the induced birational map.

 Let $H_Y$ be the pull--back of the
 hyperplane class of $\PP^n$ on $Y$ and let $H_W$ be its restriction to $W$. Let $D_W=\sum _{i=1}^h D_{W,i}$   and  $\bar F$ be the restrictions
 of $D$ and $\bar B_1$ to $W$. Note that $\bar F$ is the proper transform of $F$ on $W$.
 Let $\cX_Y$ and $\cX'_Y$ be the proper transforms of $\cX$ and $\cX'$ on $Y$.  
The linear system $\cX_Y$ is complete by the geometric linear normality assumption, so $\cX'_Y$ is complete too.
We may abuse notation and denote by $\cX_Y$ and $\cX'_Y$  the associated line bundles. 
 Then $\cX_Y$ is of the form
 $\cO_Y(dH_Y-\sum_{i=1}^h m_iD_i-\bar B_1)$, where the $m_i$'s are non--negative integers. 
 By Lemma \ref{lem:1} one has $\cX_{Y\vert W}\cong \cO_W$.
 This implies that $\bar F\equiv dH_W-\sum_{i=1}^h m_iD_{W,i}$.
 Moreover
 \[
 N_{W,Y}\cong \cO_W((-d+1)H_W+\sum_{i=1}^h (m_i-1)D_{W,i})
 \]
hence
\[
\cX'_{Y\vert W}\cong N_{W,Y}^*\cong \cO_W((d-1)H_W-\sum_{i=1}^h (m_i-1)D_{W,i}).\]

The linear system $\cX'_{Y\vert W}$ determines the restriction of 
$\tilde \sigma$ to $W$, which is a resolution of $\bar\sigma: V\map E$.
Hence $\cX'_{Y\vert W}$  is relatively complete and 
it is also determined by a linear system contained in $\vert \cO_W(2H_W-D_W)\vert$. 
Thus the proper transform $\bar F'$ 
of $F'$ belongs to $\vert \cO_W((d-3)H_W-\sum_{i=1}^h (m_i-2)D_{W,i})\vert$
and the sections of the bundle $\cX'_{Y\vert W}$ vanish on divisors of the 
form $\bar F'+Q$, where $Q\in \vert \cO_W(2H_W-D_W)\vert$.

The proper transform $\bar C'$ of $C'$ belongs to $\vert\cO_W(3H_W-2D_W)\vert$ so that $\bar F'+\bar C'\in \vert\cO_W(dH_W-\sum_{i=1}^h m_iD_{W,i}))\vert$. 
Also $\bar F$ sits
in the same linear system. If $F_i\subset V$ is defined by the equation 
$\frac {\partial \bar g}{\partial x_i}=0$, $1\le i\le n$, then the proper transform $\bar F_i$ of $F_i$ on $W$ sits in $\cX'_{Y\vert W}$. 
So we have relations of the form $\frac {\partial \bar g}{\partial x_i}=f'q_i$, where $q_i$ is 
of degree $2$. By Euler's theorem, $f'$ divides $\bar g$, i.e.
$\bar g=f'\cdot h$, where $h$ is a cubic form. Then the proper transform $\bar G$ of $G=V(h)$ on $W$ sits in
$\vert\cO_W(3H_W-2D_W)\vert$. By the discussion in \S \ref {ssec:cremona}, this linear system  contains only
$\bar C'$. This proves the claim.
\end{proof}

\begin{claim}\label{cl:2} The map $\tilde \sigma_{\vert W}$ contracts the divisor $\bar F$. 
\end{claim}
\begin{proof} [Proof of the Claim] Otherwise $\tilde\sigma(\bar F)$
would be a hypersurface on $E$ and $\bar\tau$ would be defined on the general pointed
of each of its components.
 Then the general hyperplane section
of $X$, corresponding to the hypersurface $g=0$, would pass through $x$, a contradiction.\end{proof}

Finally $\bar F=\bar F'+\bar C'$ (by Claim \ref {cl:1}) is contained in the ramification locus 
of $\tilde \sigma_{\vert W}$ (Claim \ref {cl:2}), which is $\bar C'$ plus exceptional divisors of $W\to V$.
This yields $\bar F'=0$, hence $d=3$, and $m_i=2$ for all $i\in \{1,\ldots,h\}$.
\end{proof}

\subsection{Remarks and examples}\label {ssec:examples}

In  \cite {PR} several results are proved for irreducible, non--degenerate varieties $X\subset \p^ {2n+1}$ of dimension $n$ which are 3-covered by rational normal cubics.  In view of Theorem \ref {th:hyperplanar}, all of them can be applied to normal, geometrically linearly normal, Bronowski varieties $X\subset \PP^{2n+1}$ presenting case $(H2)$.  We briefly summarize here some of these applications.

\subsubsection{The Hilbert scheme of lines}

Let $X\subset \p^r$ be an irreducible projective variety and let $x\in X$. Let 
$\mathcal L_x$ be the Hilbert scheme of lines in $X$ passing through $x$, which
can be seen as a subscheme of the exceptional divisor of the blow--up of $X$ at $x$. 
The following is a consequence of  the results in  \cite  [\S 5] {PR}.

\begin{corollary} \label{cor:cone} Hypotheses and notation as in Theorem \ref {th:hyperplanar}. If $x\in X$ is a general point, then:
\begin{itemize}
\item [(i)] $\mathcal L_x$ is isomorphic to both $Z$ and $Z'$;
\item [(ii)] if $X$ is smooth, then $\mathcal L_x$ (hence $Z$ and $Z'$) is also smooth;
\item [(iii)] $Z'$ coincides with the singular locus scheme of the general element in the linear system $\cX$;
\item [(iv)]  $\bar \tau$ is \emph{involutory}, i.e. it coincides with its inverse up to projective transformations;
\item [(v)] the hypersurfaces $C$ and $C'$ are projectively equivalent.
\end{itemize}
\end{corollary}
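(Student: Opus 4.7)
The plan is to apply Theorem~\ref{th:hyperplanar} and then import the systematic analysis of \cite[\S 5]{PR}. By Theorem~\ref{th:hyperplanar}, $X\subset\p^{2n+1}$ is a variety $3$-covered by rational normal cubics, whose inverse tangential projection $\sigma$ at the general point $x$ is given by the relatively complete linear system of cubics having double points along $Z'$. All hypotheses of \cite[\S 5]{PR} are therefore in force, and each of (i)--(v) will be obtained via that dictionary together with Proposition~\ref{prop:22}.

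For (i), I would identify $\mathcal L_x$ with $Z$ by sending a line $\ell\subset X$ through $x$ to its tangent direction at $x$: since $\ell$ is contracted by $\tau_x$, this direction is a point of $E\cong\p^{n-1}$ where $\bar\tau$ is not defined, i.e.\ a point of $Z$; conversely, a point of $Z$ is, by the analysis of the second fundamental form in \cite[\S 5]{PR}, the tangent direction of a unique line of $X$ through $x$. To identify $\mathcal L_x$ with $Z'$, one uses the explicit cubic parametrisation $\sigma$: for a general $q\in Z'$, the pencil of lines in $\p^n$ through $q$ hitting $V$ is mapped by $\sigma$ to a pencil of curves on $X$ through $x=\sigma(V)$, one of whose members is a line; this construction yields a bijection $Z'\leftrightarrow\mathcal L_x$, and it is scheme-theoretic because $\sigma$ is defined by cubics with ordinary double points along $Z'$. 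Item (ii) then follows at once: when $X$ is smooth, a line through a general point has normal bundle with $H^1=0$ (being a minimal free rational curve on a $3$-covered variety), so $\mathcal L_x$ is smooth by standard deformation theory, and hence so are $Z$ and $Z'$.

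Item (iii) is almost tautological given Theorem~\ref{th:hyperplanar}: every member of $\cX$ has double points along $Z'$, and the general member has no further singular points because its complement contains a dense open subset on which $\sigma$ is an isomorphism; scheme-theoretically the singular subscheme equals $Z'$ by the characterisation of homaloidal cubic systems reviewed in \S\ref{ssec:cremona}.

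For (iv) and (v), the two identifications of (i) provide a canonical isomorphism $Z\to Z'$. Since $Z\subset E$ and $Z'\subset V$ are non-degenerate and each uniquely determines a cubic hypersurface of double points (namely $C$ and $C'$, by Proposition~\ref{prop:22}), this isomorphism extends to a projective transformation $\alpha\colon E\to V$ taking $Z$ to $Z'$, and then automatically $C$ to $C'$; this is (v). Pulling $\bar\sigma$ back along $\alpha$ produces a quadro-quadric Cremona transformation of $E$ with the same base scheme $Z$ as $\bar\tau$, hence proportional to $\bar\tau$, which is exactly assertion (iv). The key obstacle---and the reason this statement really is a corollary of \cite[\S 5]{PR} rather than a routine consequence of Theorem~\ref{th:hyperplanar} alone---is to check that the bijection $\mathcal L_x\leftrightarrow Z\leftrightarrow Z'$ is compatible with the Cremona structure, which is precisely the content of the Jordan-algebra-theoretic analysis carried out in \cite[\S 5]{PR}.
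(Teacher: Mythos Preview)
Your approach is exactly the paper's: the paper offers no argument beyond the sentence ``The following is a consequence of the results in \cite[\S 5]{PR}'', i.e.\ apply Theorem~\ref{th:hyperplanar} to land in the setting of \cite[\S 5]{PR} and quote. Your added sketches are largely sound, with one genuine leap you correctly flag at the end: in (iv)--(v) the passage from the \emph{abstract} isomorphism $Z\cong\mathcal L_x\cong Z'$ to a \emph{projective} isomorphism $\alpha\colon E\to V$ carrying $Z$ to $Z'$ is not automatic (an isomorphism of schemes need not extend linearly to the ambient space), and it is precisely here---not merely in ``compatibility with the Cremona structure''---that the analysis of \cite[\S 5]{PR} is indispensable rather than decorative.
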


\subsubsection{Parametric representation}\label{parametric}
Notation as in \S \ref {ssec:Q1case}. 
Assume that the point $p=[1,0,\ldots,1]\in \p^ {2n+1}$ corresponds via $\sigma$ to the general point $x$ of $X$. By Proposition \ref {prop:triplept} we may assume that $g=0$ has a triple point at $p$, hence $g$ does not depend  on $x_0$. Moreover $g_i=x_0f_i$, $1\le i\le n$, and we may assume the $f_i$'s also independent from $x_0$, defining $\bar \sigma$ on $V$. 
Equivalently the linear system generated by $f_1,\ldots,f_n$ corresponds to ${\rm II}_x$,
and $f_1=\ldots=f_n=0$ defines $Z'$. 
Finally we may assume $g_i=x_0^ 2x_{i-n-1}$, $n+1\le i\le 2n+1$. Thus,  in the same spirit of \cite [\S 5] {PR},  the following gives a parametric representation of (an open affine subset of) $X$
\[
{\bf t}= (t_1,\ldots,t_n) \in \mathbb A ^ n\to (t_1,\ldots,t_n,f_1({\bf t}),\ldots, f_n({\bf t}),g(\bf t))\in 
\mathbb A^ {2n+1}
\]
There is a relation between $g$ and $f_1,\ldots,f_n$. Set ${\bf x}=(x_1,\ldots, x_n)$ and  ${\bf f}({\bf x})=(f_1({\bf x}),\ldots, f_n({\bf x}))$. 
Part (iv) of Corollary  \ref {cor:cone} and the above analysis (see also, as usual, \cite[\S 5]{PR}), 
imply that there is a non--degenerate matrix $A$ of type $n\times n$  such that
\[
A\cdot {\bf f}({\bf {\bf f}({\bf x})})^ t=g({\bf x}){\bf x}^ t.
\]
This implies that, as pointed out in Remark \ref {rem:pres}, the derivatives of $g$ belong to the ideal $(f_1,\ldots, f_n)$. 

\subsubsection{Twisted cubics over Jordan algebras}\label{Jalg} 

The results in  \cite[\S 5]{PR} and in \cite{PR2} imply the equivalence of:

\begin{itemize}
\item [(i)] the classification, up to projective equivalence, of
irreducible varieties $X\subset \PP^{2n+1}$, $n\geq 3$, 3-covered by twisted cubics and different from
rational normal scrolls.
\item [(ii)] the classification, up to projective equivalence, of  involutorial  $(2,2)$ Cremona transformations of $\p^ n$, $n\ge 2$;
\item [(iii)] the classification, up to isomorphisms, of cubic, complex, unitary \emph{Jordan algebras}  $\mathbb J$ of dimension $n+1$.
\end{itemize}

Items (i), {as proved in   \cite[\S 5]{PR} and \cite{PR2}},  give rise to items (ii). These give rise to items (iii)
(see \emph{loc. cit}). Namely,  given an  involutorial $(2,2)$ Cremona transformation $\phi$ of $\p^ n$,
there is a cubic, complex, unitary Jordan algebra  $\mathbb J$, which is unique up to isomorphism,  such that $\phi$ coincides, up to a projective transformation, with the involution on $\p(\mathbb J)$ sending the class of an invertible element  $x\in \mathbb J$  to the class of its \emph{adjoint} $x^ \sharp$. Finally, again as  in   \cite[\S 5]{PR}, an  item $\mathbb J$ in (iii) gives rise to the item in (i) given by the  projective isomorphism class of the \emph{twisted cubic} $X_\mathbb J$ over $\mathbb J$, defined as 
the Zariski closure of the image of the map
\[
x\in \mathbb J\dasharrow [1, x, x^ \sharp, N(x)]\in \p(\mathbb C\oplus \mathbb J\oplus \mathbb J\oplus\mathbb C)
\]
where $N$ is the \emph{norm} of $\mathbb J$, i.e. the cubic form coinciding, up to a constant,  with the cubic invariant of $X_\mathbb J$.  In this setting, smooth items in (i) correspond to items in (ii) with smooth base locus and to semisimple Jordan algebras. 

Varieties of the form $X_\mathbb J$ are geometrically linearly normal (see \cite[Lemma 5.1]{PR}).
The analysis of the low dimensional examples supports evidence for conjecturing that every variety $X_\mathbb J$ should be also normal. Were this true,
Theorem \ref {th:hyperplanar} would imply that  (i)--(iii) are also equivalent to

\begin{itemize}
\item [(iv)] the classification, up to projective equivalence, of
normal, geometrically linearly normal Bronowski varieties $X\subset \PP^{2n+1}$, $n\ge 3$, presenting case $(H2)$.
\end{itemize}
\medskip

\subsubsection{Lagrangian Grassmannians}\label{ssec:laggrass}
The only simple, cubic, complex, unitary Jordan algebras are 
${\rm Sym}^3{\mathbb C}$, $M_3{\mathbb
C}$, ${\rm Alt}_6{\mathbb C}$ and $H_3{\mathbb O}$ (this follows from Jordan--von Neumann--Wigner classification theorem of simple, complex, unitary Jordan algebras, see \cite[p. 49] {mccrimmon}). 
Consider the twisted cubics over  these algebras. They coincide with the
{\it lagrangian
grassmannians} $\G^{\rm lag}_k(2,5)$ over the complexification of the four composition {real} algebras
$k=\mathbb R,\mathbb C,\mathbb H,\mathbb O$, in their
Pl\"ucker embedding (see \cite{Mu2} and \cite[\S 5]{PR} for  details).
Another description is as follows (see \cite{Zak2}):
\begin{itemize}
\item[(i)] $\G^{\rm lag}_{\mathbb R}(2,5)$ is the {\it symplectic grassmannian}
$\G^{\rm symp}(2,5)$ of
$2$--planes in $\Proj^5$ on which a non--degenerate \emph{null--polarity} is
trivial, i.e. they are isotropic with respect
to a non--degenerate antisymmetric form. This is a variety of dimension
$6$ embedded in $\p^{13}$ via its
Pl\"ucker embedding;
\item[(ii)]$\G^{\rm lag}_{\mathbb C}(2,5)$ is the \emph{grassmannian} $\G(2,5)$
of $2$--planes in $\Proj^5$ of
dimension $9$ sitting in $\Proj^{19}$ via the Pl\"ucker
embedding;
\item[(iii)]$\G^{\rm lag}_{\mathbb H} (2,5)$ is the \emph{spinor variety} $S_{5}$ 
of dimension $15$ in $\Proj^{31}$, {parametrizing linear $5$-dimensional subspaces of a smooth
quadric ${\mathbb Q}^{10}\subset\p^{11}$};
\item[(iv)] $\G^{lag}_{\mathbb O}(2,5)$ is the \emph{$E_7$--variety} of dimension
$27$ in $\Proj^{55}$.
\end{itemize}

If $n\geq 3$ and $\Q^{n-1}\subset\p^n$ is a smooth quadric, the
Segre embedding  of $\p^1\times \Q^{n-1}$ into $\p^{2n+1}$ is  the twisted cubic over  the unique
semi--simple, non simple, cubic complex Jordan algebra (see again \cite{Mu2},  \cite[\S 5]{PR} and 
Jordan--von Neumann--Wigner theorem  \cite[p. 49] {mccrimmon}). 

For lagrangian grassmannians (i)--(iv),
$\mathcal L_x$ is one of the four Severi varieties, i.e. 
the \emph{Veronese surface}
$V_{2,2}\subset \p^5$ in case (i); the \emph{Segre variety} ${\rm Seg}(2,2)\subset \p^8$ in case (ii);
the grassmannian $\mathbb G(1,5)\subset\p^{15}$ in case (iii); the \emph{Cartan variety} in   $\p^{26}$
in case (iv). 
By Corollary \ref {cor:cone} the lagrangian grassmannian of dimension  $n=6,9,15,27$,  
is the closure of the image of the birational map given by cubics in $\p^n$ singular along a degenerate Severi variety of dimension $m=2,4,8,16$. 

This viewpoint is valid also for
$\p^1\times \Q^{n-1}$, $n\geq 3$, which is closure of the image of $\p^n$ via 
the linear system of cubics singular along $p\cup \Q^{n-2}$, where $p\not\in \langle \Q^{n-2}\rangle$.

By the considerations in \S \ref {Jalg} we have:

\begin{corollary}\label{cor:smooth} Let $X\subset\p^{2n+1}$ be a smooth, linearly normal, Bronowski variety of dimension $n$ presenting the hyperplane case. Then $X$ is projectively equivalent either to a rational normal scroll or to the Segre embedding of $\p^1\times \Q^{n-1}$ or to one of the four lagrangian
grassmannians.
\end{corollary}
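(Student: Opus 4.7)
The plan is to split into the two subcases $(H1)$ and $(H2)$ of Remark \ref{rem:planecase}, handling $(H1)$ directly via Proposition \ref{prop:scrolls} and, in the subcase $(H2)$, using Theorem \ref{th:hyperplanar} together with the Jordan algebra dictionary of \S\ref{Jalg} and the Jordan--von Neumann--Wigner classification.

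If $X$ presents case $(H1)$, smoothness of $X$ implies normality and Proposition \ref{prop:scrolls}(ii) immediately yields that $X$ is a smooth rational normal scroll, concluding this case.

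Suppose then that $X$ presents case $(H2)$. I would first verify the hypotheses of Theorem \ref{th:hyperplanar}: smoothness gives normality, and by the equivalence noted in \S\ref{ssec:linorm} a normal and linearly normal variety is automatically geometrically linearly normal. Theorem \ref{th:hyperplanar} then tells me that $X$ is $3$-covered by twisted cubics, so by the equivalence of items (i) and (iii) in \S\ref{Jalg} there exists a cubic, complex, unitary Jordan algebra $\mathbb J$ such that $X$ is projectively equivalent to the twisted cubic $X_{\mathbb J}$. By \cite[Theorem 5.7]{PR}, the smoothness of $X_{\mathbb J}$ forces $\mathbb J$ to be \emph{semisimple}.

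At this point the classification is completed by the Jordan--von Neumann--Wigner theorem (\cite[p.~49]{mccrimmon}): the simple cubic complex unitary Jordan algebras are exactly ${\rm Sym}^3{\mathbb C}$, $M_3{\mathbb C}$, ${\rm Alt}_6{\mathbb C}$ and $H_3{\mathbb O}$, and their associated twisted cubics are the four lagrangian grassmannians recalled in (i)--(iv) of \S\ref{ssec:laggrass}. A non-simple semisimple cubic Jordan algebra is a non-trivial product of strictly smaller cubic Jordan algebras; the only such product compatible with the cubic unitary structure is ${\mathbb C} \times J(q)$, where $J(q)$ is the Jordan algebra of a non-degenerate quadratic form $q$, and the corresponding twisted cubic is precisely the Segre embedding $\p^1 \times \Q^{n-1}$ described at the end of \S\ref{ssec:laggrass}. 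The main obstacle I expect is this last step, namely ruling out spurious non-simple semisimple cubic Jordan algebras and matching ${\mathbb C} \times J(q)$ with $\p^1 \times \Q^{n-1}$; this is handled by a direct analysis of the cubic norm and the adjoint on products of cubic Jordan algebras, combined with the explicit representation of $\p^1 \times \Q^{n-1}$ via cubics singular along $\{\mathrm{pt}\} \cup \Q^{n-2}$ recalled in \S\ref{ssec:laggrass}.
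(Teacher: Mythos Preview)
Your proposal is correct and follows essentially the same route as the paper: split into $(H1)$/$(H2)$ via Proposition~\ref{prop:scrolls}, in case $(H2)$ apply Theorem~\ref{th:hyperplanar} (the smoothness and linear normality hypotheses yield normality and geometric linear normality exactly as you say), pass to the Jordan algebra dictionary of \S\ref{Jalg}, and conclude via the Jordan--von Neumann--Wigner classification of semisimple cubic Jordan algebras.

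One small remark on attribution: you invoke \cite[Theorem~5.7]{PR} for the implication ``$X_{\mathbb J}$ smooth $\Rightarrow$ $\mathbb J$ semisimple'', but the paper cites that correspondence as part of the general discussion in \S\ref{Jalg} (smooth items in (i) $\leftrightarrow$ semisimple algebras in (iii)), and presents \cite[Theorem~5.7]{PR} instead as an \emph{alternative}, purely geometric proof of the whole corollary via Ein--Shepherd-Barron's classification of special $(2,2)$ Cremona transformations. So \cite[Theorem~5.7]{PR} actually gives the full list directly, without passing through Jordan--von Neumann--Wigner; your route is the Jordan-algebraic one the paper sketches first. Either works, and your extra paragraph identifying the unique non-simple semisimple case with $\mathbb C\times J(q)$ and $\p^1\times\Q^{n-1}$ is a welcome clarification that the paper leaves implicit.
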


This can be deduced from Jordan--von Neumann--Wigner classification theorem of simple, complex, unitary Jordan algebras. However in \cite[Theorem 5.7]{PR} the same result is proved only with geometric arguments relying on 
Ein--Shepherd-Barron's classification of $(2,2)$ \emph{special} Cremona transformations, i.e.  
$(2,2)$  Cremona transformations whose base locus is smooth and irreducible (see \cite {esb}).
Hence  \cite[Theorem 5.7]{PR}, or  Ein--Shepherd-Barron's classification in  \cite {esb}, 
are equivalent to the aforementioned classification of simple, complex, unitary Jordan algebras. 

\begin{remark}
As far as we know Corollary \ref {cor:smooth} is the first classification result of a class of smooth OADP
varieties in arbitrary dimension. From a hierarchical viewpoint, OADP varieties presenting the hyperplane
case are the less complicated ones and therefore one may expect that they form the class
with more examples. 

Divisors of type $(1,2)$ in the Segre variety ${\rm Seg}(1,n)\subset\p^{2n+1}$ provide an infinite series of examples of smooth OADP varieties presenting {\it the hyperquadric case}, i.e. the fundamental hypersurface $V\subset\p^n$ is a 
quadric (which is smooth in this case). 
We are not aware of any other series of smooth examples presenting the \emph{degree $d$--case} (i.e. the fundamental hypersurface has degree $d$), with $d\ge 3$. 
\end{remark}

\subsubsection{Dimensions 3 and 4}\label{ssec:3-4}

In \cite[\S\S 4, 5]{PR} there is the classification of irreducible $n$--dimensional varieties $X\subset\p^{2n+1}$ which are 3--covered by twisted cubics for $n=2,3,4$. The same method of proof could be extended  to the case $n=5$.  In view of \S \ref {Jalg},  the case $n=3$ boils down to the well known 
classification of planar quadratic transformations. The result, which can be also directly 
deduced from
Theorem \ref {th:hyperplanar}, is as follows:

 \begin{corollary}\label{prop:planar} Let $X\subset \PP^7$ be a normal, geometrically normal, Bronowski  threefold  presenting case $(H2)$. Then $X$ is the image in $\PP^ 7$ of $\PP^ 3$ via the rational map determined by a linear system of cubic surfaces singular along a 0--dimensional curvilinear scheme $Z$ of length 3. If  $Z$ consists of three distinct points, then $X={\rm Seg}(1^3)$ is the Segre embedding of $\p^1\times\p^1\times\p^1\cong \p^1\times \Q^2$; if $Z$ is supported at  two distinct points, then $X$ is the image via the Segre embedding of $\PP^ 1\times S(0,2)$ {\rm(}$S(0,2)$ is the quadric cone in $\p^3${\rm)}. 
 
 Equivalently, $X$ is  projectively equivalent to a {twisted cubic curve over a 3--dimensional cubic complex, unitary Jordan algebra  $\AAA$}, where $\AAA$ is isomorphic to one of the following: 
 \[
{\text either}\quad  \C\times \C\times \C\quad {\text or}\quad \C\times \frac {\C[x]}{(x^ 2)}\quad {\text or}\quad  \frac {\C[x]}{(x^ 3)}.
 \] 
 \end{corollary}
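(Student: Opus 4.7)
The plan is to specialize Theorem \ref{th:hyperplanar} to $n=3$ and then invoke the classical classification of plane quadratic Cremona transformations. By Theorem \ref{th:hyperplanar}, the inverse $\sigma:\p^3\dashrightarrow X\subset\p^7$ of the general tangential projection is defined by the relatively complete linear system of cubic surfaces in $\p^3$ whose first partial derivatives lie in the saturated ideal of $Z'\subset V\cong\p^2\subset\p^3$, where $Z'$ is the base locus scheme of the quadro--quadric Cremona transformation $\bar\sigma:V\dashrightarrow E$ supplied by Proposition \ref{prop:22}. Hence $Z'$ is a length--$3$ subscheme of a projective plane, not contained in a line, and is curvilinear because it is the base scheme of a plane Cremona transformation of type $(2,2)$, as discussed in \S\ref{ssec:cremona}.

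The classical classification of plane quadratic Cremona transformations produces exactly three projective equivalence classes of such $Z'$: \emph{(a)} three distinct non--collinear reduced points; \emph{(b)} a curvilinear length--$2$ scheme supported at a point $p_1$ together with a reduced point $p_2$ not lying on the tangent direction of the length--$2$ scheme; \emph{(c)} a curvilinear length--$3$ scheme supported at a single point. In each of these cases I would choose explicit coordinates in $\p^3$ so that $V=\{x_0=0\}$, write down a basis of the $8$--dimensional vector space of cubic forms with partials in $I_{Z'}$, and read off the image $X$ via the induced parametric representation as in \S\ref{parametric}. The dimension count $20-12=8$ confirms that in each case the system has the expected dimension.

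In case \emph{(a)} the resulting map is, up to reordering of the coordinates, the Segre embedding $\p^1\times\p^1\times\p^1\hookrightarrow\p^7$, which coincides with the Segre embedding of $\p^1\times\Q^2$ since $\Q^2\cong\p^1\times\p^1$. In case \emph{(b)} the linear system factors through the product of the pencil of planes through the line $\langle p_1,p_2\rangle$ with the linear system of conics in $V$ singular at $p_1$ with fixed tangent direction and passing through $p_2$, yielding the Segre embedding of $\p^1\times S(0,2)$. Case \emph{(c)} is handled similarly and produces the third variety.

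The identification with the three Jordan algebras follows from the equivalence of items (i) and (iii) recalled in \S\ref{Jalg}: the cubic unitary Jordan algebras of complex dimension $3$ are precisely the commutative associative unital $\C$--algebras of that dimension, which up to isomorphism are $\C\times\C\times\C$, $\C\times\C[x]/(x^2)$, $\C[x]/(x^3)$. Computing norm and adjoint in the parametric model $x\mapsto[1,x,x^\sharp,N(x)]$ of $X_{\AAA}$ makes the match to configurations \emph{(a)}, \emph{(b)}, \emph{(c)} direct. The main technical obstacle is the coordinate--level verification for cases \emph{(b)} and \emph{(c)}, in particular checking curvilinearity of $Z'$ from the start, but this is elementary and is also carried out in \cite[\S 4]{PR}.
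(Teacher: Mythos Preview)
Your proposal is correct and follows exactly the approach indicated in the paper: specialize Theorem \ref{th:hyperplanar} to $n=3$ and reduce to the classical classification of plane quadratic Cremona transformations, then match the three configurations of $Z'$ with the three $3$--dimensional cubic Jordan algebras via \S\ref{Jalg}. The paper does not spell out a proof beyond this outline, so your explicit treatment of cases \emph{(a)}--\emph{(c)} and the dimension count simply fill in what the paper leaves to the reader.
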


\begin{remark} {\rm The three varieties in the statement of Corollary  \ref {prop:planar} appear in Fujita's classification
of varieties with $\Delta$--genus one. They fill up a component 
of the Hilbert scheme, whose general member corresponds to ${\rm Seg}(1^3)$. The 
other two varieties have a double line. They are Verra varieties. 
By projecting form the double line one finds $S(2,2)$ and $S(1,3)$ in the two cases.} 
\end{remark}

One can treat also the four  dimensional case, whose analysis depends on the classification of  quadro--quadric
Cremona transformations in $\p^3$ (see \cite{PRV}) and on the classification of four dimensional, complex, cubic Jordan algebras over the complex field  (see again \cite[\S\S 4,5]{PR} for details).

\begin{corollary}\label{cor:fourdim} 
 Let $X\subset \PP^9$ be a normal, geometrically normal, Bronowski  fourfold  presenting case $(H2)$. Then $X$ is projectively equivalent to
a  \emph{twisted cubic curve} over a 4--dimensional, cubic, complex, unitary Jordan algebra $\AAA$, where $\AAA$ is isomorphic to one of the seven cases in \cite[Table 2]{PR}.
 \end{corollary}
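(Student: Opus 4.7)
The plan is to derive this essentially as a direct consequence of Theorem \ref{th:hyperplanar} combined with the equivalences recalled in \S\ref{Jalg} and the classification results of \cite{PRV} and \cite[\S\S 4,5]{PR}. First, I would apply Theorem \ref{th:hyperplanar} with $n=4$: since $X\subset \PP^9$ is a normal, geometrically linearly normal Bronowski fourfold presenting case $(H2)$, it is $3$--covered by twisted cubics. By the equivalence of items (i) and (iii) of \S\ref{Jalg} (established in \cite[\S 5]{PR} and \cite{PR2}), the projective equivalence class of $X$ is thus in canonical bijection with the isomorphism class of a $4$--dimensional cubic, complex, unitary Jordan algebra $\AAA$, with $X$ projectively equivalent to the twisted cubic $X_\AAA\subset\PP^9$ obtained as the closure of the image of
\[
x\in \AAA\map [1,\,x,\,x^\sharp,\,N(x)]\in \PP(\C\oplus \AAA\oplus \AAA\oplus \C).
\]

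Next, the statement reduces to enumerating the $4$--dimensional cubic, complex, unitary Jordan algebras. This may be done in two equivalent ways. One is to use the intermediate equivalence with item (ii) of \S\ref{Jalg} and invoke the classification of quadro--quadric Cremona transformations of $\PP^3$ from \cite{PRV}, extracting those that are involutorial; the other is to perform a direct structural classification of the possible algebras, as carried out in \cite[\S\S 4,5]{PR}. Both routes produce exactly the list of seven isomorphism classes tabulated in \cite[Table 2]{PR}, and one checks (case by case, as in \emph{loc.\ cit.}) that no two of them give projectively equivalent varieties $X_\AAA$.

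The substantive work is of course the classification itself, carried out in \cite{PRV} and \cite[\S\S 4,5]{PR}; the contribution of the present corollary is to upgrade that classification from $3$--covered varieties to normal, geometrically linearly normal Bronowski fourfolds in case $(H2)$, an upgrade made possible by Theorem \ref{th:hyperplanar}. The only verification that is not already in the cited references is that each of the seven algebras $\AAA$ does produce a variety $X_\AAA$ that is normal and geometrically linearly normal, so that the correspondence of \S\ref{Jalg} truly identifies $X$ with $X_\AAA$; this is immediate from the explicit parametric presentation $x\mapsto [1,x,x^\sharp,N(x)]$ recalled in \S\ref{parametric}, which in each of the seven cases yields a variety whose geometry can be checked by hand.
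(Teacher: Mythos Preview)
Your proposal is correct and follows essentially the same route as the paper, which offers no formal proof but only the sentence preceding the corollary: the result follows from Theorem \ref{th:hyperplanar} (giving that $X$ is $3$--covered by twisted cubics) together with the classification of such fourfolds in \cite[\S\S 4,5]{PR}, equivalently the classification of quadro--quadric Cremona transformations of $\p^3$ in \cite{PRV}.

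One small remark: your final paragraph about verifying that each $X_\AAA$ is normal is not needed for the corollary as stated, which only asserts the implication from normal $(H2)$ Bronowski fourfolds to twisted cubics over Jordan algebras, i.e.\ the direction $(\text{iv})\Rightarrow(\text{i})\Rightarrow(\text{iii})$ in the language of \S\ref{Jalg}. Moreover, your claim that normality of $X_\AAA$ is ``immediate'' from the parametric presentation is a bit optimistic: the paper itself only \emph{conjectures} that every $X_{\mathbb J}$ is normal (see the end of \S\ref{Jalg}), though geometric linear normality is indeed established in \cite[Lemma 5.1]{PR}. Since that converse direction is not required here, this does not affect the validity of your argument.
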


\begin{remark}{\rm 
In \cite{Semple} (see also \cite{brunoverra}) Semple presents the classification of general $(2,2)$ Cremona transformations
of $\p^4$.  {The classification of every $(2,2)$ Cremona transformation in $\p^4$ has been recently completed by Luc Pirio and the second author. This result
implies, among other things,  the classification of normal, geometrically normal, Bronowski  fivefolds  presenting case $(H2)$. We do not state this classification here.}}  
\end{remark}

\end{document}